 \date{\today}
\newcommand{\margp}[1]{}
\newtheorem{theorem}{Theorem}[section]
\newtheorem{lemma}[theorem]{Lemma}
\newtheorem{corollary}[theorem]{Corollary}
\newtheorem{proposition}[theorem]{Proposition}
\newtheorem*{theorem_}{Theorem}
\theoremstyle{definition}
\newtheorem{example}[theorem]{Example}
\newtheorem{remark}[theorem]{Remark}
\numberwithin{equation}{section}
\newcommand{\Ker}{\operatorname{Ker}}
\newcommand{\clos}{\operatorname{clos}}
\newcommand{\rea}{{\rm Re}\,}
\newcommand{\ima}{{\rm Im}\,}
\newcommand\defin {\overset {\text {\rm def} }{=}}
\newcommand\beqn{\begin{equation}}
\newcommand\neqn{\end{equation}}
\newcommand{\A}{\mathcal{A}}
\newcommand{\B}{\mathcal{B}}
\newcommand{\LL}{\mathcal{L}}
\newcommand{\F}{A}
\newcommand{\al}{\alpha}   
\newcommand{\de}{\delta}   
\renewcommand{\kappa}{\varkappa}   
\newcommand{\Ga}{\Gamma}   
\newcommand{\deab}{\varkappa}   
\newcommand{\om}{\omega}   
\newcommand{\si}{\sigma}   
\newcommand{\BR}{\mathbb{R} } 
\newcommand{\BZ}{\mathbb{Z} }
\newcommand{\la}{\lambda}
\newcommand{\gaa}{\gamma}
\newcommand{\RR}{\mathbb{R}}
\newcommand{\BC}{\mathbb{C}}
\newcommand{\sm}{\setminus}
\renewcommand{\phi}{\varphi}
\newcommand{\cD}{\mathcal{D}}
\newcommand{\T}{\mathcal{T}}
\newcommand{\cDA}{\cD(A)}
\newcommand{\cDL}{\cD(L)}
\begin{document}
\sloppy

\title[One-dimensional perturbations with empty spectrum] {One-dimensional
perturbations \\ of unbounded selfadjoint operators \\ with empty
spectrum}

\author{Anton D. Baranov}
\address{
Department of Mathematics and Mechanics, Saint Petersburg State
University, 28, Universitetski pr., St. Petersburg, 198504,
Russia} \email{anton.d.baranov@gmail.com}

\author{Dmitry V. Yakubovich}
\address{Departamento de Matem\'{a}ticas,
Universidad Autonoma de Madrid, Cantoblanco 28049 (Madrid) Spain
\enspace and
\newline
\phantom{r} Instituto de Ciencias Matem\'{a}ticas (CSIC - UAM -
UC3M - UCM)} \email{dmitry.yakubovich@uam.es}
\thanks{The first author has been supported by the Chebyshev Laboratory
(St. Petersburg State University) under RF Government grant 11.G34.31.0026
and by RFBR grant 11-01-00584-a.
The second author has been supported by the
Project MTM2008-06621-C02-01, DGI-FEDER, of the Ministry of
Science and Innovation, Spain.}


\begin{abstract}
We study spectral properties of
one-dimensional singular perturbations of an unbounded
selfadjoint operator and
give criteria for the possibility to remove the whole spectrum
by a perturbation of this type.
A counterpart of our results for the case
of bounded operators provides a complete description of compact selfadjoint
operators whose rank one perturbation is a Volterra operator.

\medskip

\noindent {\bf M.S.C.(2000):}  Primary: 47A55; Secondary: 47B25, 47B07.

\noindent {\bf Keywords:} selfadjoint operator, singular rank one
perturbation, inner function, entire function, Krein class.
\end{abstract}
\maketitle

{} \vskip-2cm

\section{Introduction}

We study {\it singular} rank one perturbations of an unbounded
selfadjoint operator. This paper is a continuation of
\cite{bar-yak}, where the completeness of eigenvectors of these
perturbations was considered.

Let $\mu$ be a {\it singular} measure on $\RR$ and let $\A$ be the operator of
multiplication by the independent variable $x$ in $L^2(\mu)$ (thus, $\A$
is a cyclic {\it singular} selfadjoint operator). Moreover, we
assume that $0 \notin {\rm supp}\, \mu$, and
so $\A^{-1}$ is a bounded operator in $L^2(\mu)$.

Now we define {\it singular rank one perturbations} of $\A$.
Let $a,b$ be functions such that \beqn \label{cond-a-b}
\frac{a}{x}, \  \frac{b}{x}  \in L^2(\mu), \neqn however,
possibly, $a,b \notin L^2(\mu)$. Let $\deab \in \BC$ be a
constant such that
\begin{align}
\label{1+de-a-b}
\begin{aligned}
\deab&\ne
\int_\RR x^{-1}a(x)\overline{b(x)} \, d\mu(x) \\
&\qquad\qquad\qquad \text{ in the case when $a \in L^2(\mu)$.}
\end{aligned}
\end{align}

We associate with any such data $(a,b,\deab)$ a linear operator
$\LL=\LL(\A, a,b,\deab)$, defined as follows:
\beqn
\begin{aligned}
\label{2} \mathcal{D}(\LL) &\defin \big\{
y=y_0+c\cdot \A^{-1}a: \\
& \qquad \qquad c\in \BC,\, y_0\in \mathcal{D}(\A),\, \deab c+\langle y_0,
b\rangle=0
\big\};                     \\
\LL y& \defin \A y_0, \quad y\in \mathcal{D}(\LL).
\end{aligned}
\neqn

Condition \eqref{1+de-a-b} is equivalent to the uniqueness of the
decomposition $y=y_0+c\cdot \A^{-1}a$ in the above formula for
$\mathcal{D}(\LL)$, hence the operator $\LL$ is correctly defined.
The operator $\LL=\LL(\A, a,b,\deab)$ is said to be a
{\it singular rank one perturbation} of $\A$.

Singular perturbations of selfadjoint operators
have been studied for a long time, see, for instance,
\cite{Lyants-Stor-book, alb-kur-2000, Posilicano}.

Essentially, singular rank one perturbations are
unbounded algebraic inverses to bounded rank one perturbations
of bounded selfadjoint operators.
Namely, if the triple $(a,b,\deab)$ satisfies \eqref{1+de-a-b}
and $\deab \ne 0$, then
the bounded operator
$\A^{-1}- \deab^{-1} \A^{-1}a\, (\A^{-1}b)^*$
has trivial kernel, and
$$
\LL(\A, a,b, \deab)=\big(\A^{-1}-\deab^{-1} \A^{-1}a\, (\A^{-1}b)^*\big)^{-1}.
$$
Here we denote by $\A^{-1}a\, (\A^{-1}b)^*$ the bounded rank one operator
$\A^{-1}a\, (\A^{-1}b)^* f =  (f, \A^{-1}b) \A^{-1}a$, $f\in L^2(\mu)$.
Conversely, if $\A_0$ is a bounded selfadjoint operator
with trivial kernel and $\LL_0 = \A_0 + a_0 b_0^*$ is its
rank one perturbation and $\Ker \LL_0 = 0$, then the algebraic inverse
$\LL_0^{-1}$ is a singular rank one perturbation of $\A_0^{-1}$.
We refer to \cite{bar-yak} for details and for
similar statements for rank $n$ singular perturbations.

During the last 20 years selfadjoint
rank one perturbations of selfadjoint
operators were extensively studied by Simon, del Rio, Makarov
and many other authors in relation with the problem of stability
of the point spectrum and the study of the singular continuous spectrum
(see \cite{simon-makarov} and a survey \cite{simon}). Some
recent developments can be found in \cite{treil, alb-kokoshm05}).
In what follows, we consider only perturbations of compact selfadjoint
operators (or operators with compact resolvent), but the perturbations
are no longer selfadjoint. The spectral structure
of this class becomes unexpectedly rich and complicated
as soon as we leave the classes covered by classical theories
(weak perturbations in the sense of Macaev or dissipative operators).

In our preceding paper \cite{bar-yak}, we studied
the completeness of eigenvectors of $\LL$ and $\LL^*$ as well as the possilbility
of the spectral synthesis for such perturbations.
Our main tool in \cite{bar-yak} was a functional model for rank one singular
perturbations. This model realizes singular rank one perturbations as certain
`shift' operators in a so-called model subspace of the Hardy space or
%
%
in a de Branges space of entire functions.

In this paper, we address the following problem:
\medskip
\\
{\bf Problem 1.} \textit{For which measures $\mu$ does there exist a singular
perturbation $\LL$ of $\A$ of the above type whose spectrum is
empty?}
\medskip

Clearly, if such a perturbation exists, then the resolvent of $\A$
is compact, and so the measure $\mu$ in question
should necessarily be of the form $\mu = \sum_n \mu_n \delta_{t_n}$, where
$t_n \in \RR$ and $|t_n| \to\infty$, $|n| \to \infty$. Here $\{t_n\}$
may be either one-sided sequence (enumerated by $n \in \mathbb{N}$)
or a two-sided sequence (enumerated by $n \in \mathbb{Z}$).
Thus, the problem is to describe those spectra $\{t_n\}$
for which the spectrum of the perturbation is empty.
Such spectra will be said to be {\it removable}. It is clear that the
property to be removable or nonremovable  depends only on $\{t_n\}$, but
not on the choice of the masses $\mu_n$.

The change of boundary conditions of an ordinary differential operator
leads to a singular one-dimensional perturbation, see, for instance, \cite{bar-yak}.
This phenomenon of the disappearance of the spectrum if the
boundary conditions are properly chosen is well-known, see
\cite{Naim-book, Khromov, Biy_Dzhumb}. As an example,
consider the simplest first order
selfadjoint operator $\A f(t) = - i f'(t)$ on $[0,2\pi]$ with
the boundary condition $f(2\pi) = f(0)$, whose spectrum is $\BZ$.
The operator $\LL f(t) = - i f'(t)$ with the changed boundary
condition $f(0) = 0$ satisfies $\A=\LL$ on $\cDA\cap \cDL$; moreover,
$\cDA\cap \cDL$ has codimension one both in $\cDA$ and in $\cDL$.
Therefore $\LL$ is a rank one singular perturbation of $\A$ (see \cite{bar-yak}). Since the spectrum of
$\LL$ is empty, the spectrum $\sigma(\A)=\BZ$ is removable.

In view of the relation between singular rank one perturbations
and usual rank one perturbations of bounded selfadjoint operators,
the problem is equivalent to the following:
\medskip
\\
{\bf Problem 2.} {\it Describe those compact selfadjoint operators
that have a rank one perturbation which is
a Volterra operator.}
\medskip

Recall that a compact operator is called a Volterra operator if
its spectrum equals $\{0\}$
(sometimes the assumption that the kernel is trivial is also included
in the definition; all Volterra operators appearing in the present paper
have this property).
There exist a vast range of results (mainly due to Krein, Gohberg
and Macaev) relating the Schatten class properties of the imaginary
part of a Volterra operator with the corresponding property for its real part.
See \cite[Ch. IV]{Gohb_Krein} or \cite[Ch. III]{Gohb_Kr_Volterr}
for these results and for their generalizations to more general
symmetric norm ideals; see also a more recent work \cite{MatsSod}.
Also, in \cite[Ch. IV, \S 10]{Gohb_Krein}
some partial results are given
about the spectra of Volterra operators
with finite-dimensional imaginary part. Let us also mention
a remarkable theorem, which essentially goes back to
Liv\v sic \cite{liv}
(for an explicit statement see \cite[Ch. I, Th. 8.1]{Gohb_Kr_Volterr}):
{\it  any dissipative Volterra operator, which is a
rank one perturbation of a selfadjoint operator,
is unitary equivalent to the integration operator. }
In Section \ref{examples_rem} we discuss this theorem in more detail
and deduce it from out model.
Another group of results is concerned with the existence of bases
of eigenvectors for rank one perturbations of Volterra integral operators
\cite{Khromov, malam, Romaschenko}.

In the present paper we analyze the situation where
the imaginary part of a Volterra operator
$\LL$ is (at most) a rank two operator and look for
the description of the possible spectra of the real part.
As far as we know, this particular problem was not previously
\margp{Gohb Krein Volterrovy ?} considered.

Probably, the closest results to ours were obtained by Silva and Tolosa
\cite{st1} (see also their paper \cite{st2} for some more general results)
who described entire (in the sense of Krein) operators in terms of
spectra of two of their selfadjoint extensions. Their description
is based on a theorem due to Woracek \cite{wor} characterizing de Branges spaces,
which contain zerofree functions. Though our problem deals with only
one spectrum, its solution is based on a functional model
in a de Branges space and the problem essentially reduces to the
existence of a zerofree function in it.

Finite rank perturbations of Volterra operators and their models in de Branges spaces
also have been studied in several works by Gubreev and coathors.
These papers concern Riesz bases, completeness, generation of $C_0$ semigroups
and the relation with the so-called quasi-exponentials, see
\cite{Gubr-Lat2011,Gubr-Tar2010,LukGub} and references therein.
The paper by Khromov \cite{Khromov} treats
spectral properties of finite rank perturbations of Volterra operators
from a different point of view; in particular, it contains results
stated in terms of the asymptotics of the kernel $M(x,t)$ of an integral
Volterra operator near the diagonal.

In this paper, we solve Problems 1 and 2
and obtain a necessary and sufficient condition of removability
in terms of entire functions of the
so-called Krein class. We say that an entire function
$F$ (with $F(0) \ne 0$) is in the {\it Krein class}
$\mathcal{K}_1$, if it is real on $\RR$, has only real simple zeros
$t_n$ and may be represented as
\beqn
\label{krein}
\frac{1}{F(z)} = q+ \sum_n c_n
\Big(\frac{1}{t_n-z}-\frac{1}{t_n}\Big), \qquad \sum _n
t_n^{-2}|c_n| <\infty,
\neqn
where $c_n = -1/F'(t_n)$ and $q=1/F(0)$ (see Section \ref{entire}
and Lemma \ref{class-k1} for details).

Our main result reads as follows:

\begin{theorem}
\label{annih2}
Let $t_n \in \RR$ and $|t_n| \to\infty$, $|n| \to \infty$.
The following are equivalent:

$(i)$ The spectrum $\{t_n\}$ is removable\textup;

$(ii)$ There exists a function $F \in \mathcal{K}_1$
such that the zero set of $F$ coincides with $\{t_n\}$.
\end{theorem}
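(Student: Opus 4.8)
The plan is to translate removability into a question about the zeros of a single meromorphic function and then to match that question with the definition of the class $\mathcal{K}_1$. First I would compute the spectrum of a fixed perturbation $\LL=\LL(\A,a,b,\deab)$. Write $\mu=\sum_n\mu_n\delta_{t_n}$ and $c_n:=\mu_n a(t_n)\overline{b(t_n)}$; condition \eqref{cond-a-b} and the Cauchy--Schwarz inequality give $\sum_n t_n^{-2}|c_n|<\infty$. Solving $\LL y=\la y$ for $y=y_0+c\cdot\A^{-1}a$ with $\la\notin\{t_n\}$, one gets $y_0=\la c\,(\A-\la)^{-1}\A^{-1}a$ from $\A y_0=\la y$, and (using the identity $\frac{\la}{x(x-\la)}=\frac{1}{x-\la}-\frac1x$) the side condition $\deab c+\langle y_0,b\rangle=0$ becomes $c\,\Phi(\la)=0$, where
\[
\Phi(z):=\deab+\sum_n c_n\Big(\frac{1}{t_n-z}-\frac{1}{t_n}\Big).
\]
A parallel computation of $(\LL-\la)^{-1}$ shows that for $\la\notin\{t_n\}$ one has $\la\in\sigma(\LL)$ exactly when $\Phi(\la)=0$, while $t_m\in\sigma(\LL)$ exactly when $c_m=0$. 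Hence $\sigma(\LL)=\{\,z\in\BC:\Phi(z)=0\,\}\cup\{\,t_m:c_m=0\,\}$, so $\sigma(\LL)=\emptyset$ if and only if $c_n\ne0$ for every $n$ and $\Phi$ is zero-free on $\BC$ (which in particular forces $\deab=\Phi(0)\ne0$).

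Granting this, $(ii)\Rightarrow(i)$ is immediate. If $F\in\mathcal{K}_1$ has zero set $\{t_n\}$, its zeros are simple, so $c_n:=-1/F'(t_n)\ne0$, and \eqref{krein} says $1/F(z)=q+\sum_n c_n(\frac{1}{t_n-z}-\frac1{t_n})$ with $q=1/F(0)\ne0$ and $\sum_n t_n^{-2}|c_n|<\infty$. I would choose masses $\mu_n>0$ and functions $a,b$ on $\{t_n\}$ with $\mu_n a(t_n)\overline{b(t_n)}=c_n$ and $a/x,\ b/x\in L^2(\mu)$; a suitable choice of the $\mu_n$ makes in addition $a\notin L^2(\mu)$, so that \eqref{1+de-a-b} is vacuous. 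With $\deab:=q$ the function $\Phi$ attached to $\LL(\A,a,b,\deab)$ is then exactly $1/F$, which is zero-free, and all $c_n\ne0$; by the previous paragraph $\sigma(\LL)=\emptyset$, i.e.\ $\{t_n\}$ is removable.

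For $(i)\Rightarrow(ii)$, fix $\LL=\LL(\A,a,b,\deab)$ with $\sigma(\LL)=\emptyset$. Then $c_n\ne0$ for all $n$, $\deab\ne0$, and $\Phi$ is zero-free, so $G:=1/\Phi$ is entire, $G(0)=1/\deab\ne0$, its zeros are simple and coincide with $\{t_n\}$, and $1/G=\Phi$ has the form \eqref{krein}; the one obstruction to $G\in\mathcal{K}_1$ is that $G$ need not be real on $\RR$, since the residues $c_n=\mu_n a(t_n)\overline{b(t_n)}$ are in general complex. I expect this realification to be the main difficulty: from the existence of \emph{some} zero-free $\Phi$ of the above form one must produce one with \emph{real} coefficients $\tilde c_n\ne0$ and $\tilde\deab\in\RR\setminus\{0\}$, and then $F:=\big(\tilde\deab+\sum_n\tilde c_n(\frac{1}{t_n-z}-\frac1{t_n})\big)^{-1}$ lies in $\mathcal{K}_1$ with zero set $\{t_n\}$. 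This cannot be done by simply taking real or imaginary parts of the data --- that would generally both suppress poles and create zeros --- and for real coefficients zero-freeness of $\Phi$ is a delicate condition on the signs of the $\tilde c_n$ relative to the ordering of $\{t_n\}$ (for instance, if all $\tilde c_n$ have the same sign then $\Phi$ is monotone on each gap between consecutive poles, hence vanishes there). To carry out the realification I would use the de Branges space functional model of \cite{bar-yak}, which realizes $\A$ in a space $\mathcal{H}$ determined by $\{t_n\}$ (equivalently, pass to the bounded picture of Problem~2 and use the characteristic function of the Volterra operator $\LL^{-1}$): removability then becomes the existence of a zero-free entire function in $\mathcal{H}$, and Lemma~\ref{class-k1} --- with the Hermite--Biehler structure of $\mathcal{H}$ supplying the real normalization --- identifies such functions with the reciprocals of the $\mathcal{K}_1$-functions whose zero set is $\{t_n\}$, which closes the equivalence.
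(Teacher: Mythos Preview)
Your direct resolvent computation of $\sigma(\LL)$ as the zero set of $\Phi(z)=\deab+\sum_n c_n\big(\frac{1}{t_n-z}-\frac{1}{t_n}\big)$ together with $\{t_m:c_m=0\}$ is correct, and your argument for $(ii)\Rightarrow(i)$ is essentially the paper's (the paper routes it through Theorem~\ref{annih}, but the content is the same: set $\Phi=1/F$ and choose data).

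The gap is in $(i)\Rightarrow(ii)$. You correctly isolate the obstacle---$G:=1/\Phi$ is entire with zero set $\{t_n\}$ and $1/G$ has the shape \eqref{krein}, but $G$ need not be real on $\RR$---and then you defer the ``realification'' to the functional model without saying what actually happens there. Two remarks. First, the de Branges space in the model is \emph{not} determined by $\{t_n\}$ alone: it depends on the choice of $b$ (equivalently of $E$), so the phrase ``a space $\mathcal{H}$ determined by $\{t_n\}$'' is not right, and ``removability $=$ existence of a zero-free element of $\mathcal{H}$'' is not the statement either (the zero-free object is $\phi=1/E$, which lies only in $(z+i)^{-1}K_\Theta$). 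Second---and this is the point you are missing---no separate realification argument is needed once one observes that $G$ is of Cartwright class. Indeed, $\Phi$ is in the Smirnov class in each half-plane (write $\Phi(z)-\deab=z^2\sum_n \frac{c_n t_n^{-2}}{t_n-z}+z\sum_n c_n t_n^{-2}$ and use $\{c_n t_n^{-2}\}\in\ell^1$), hence $G=1/\Phi$ is of bounded type in $\BC^\pm$, and Krein's theorem gives $G\in$ Cartwright. The Cartwright factorization \eqref{cartw} then forces
\[
G(z)=K e^{i\gamma z}A(z),\qquad A(z)=\mathrm{v.p.}\prod_n\Big(1-\frac{z}{t_n}\Big),
\]
with $A$ real on $\RR$. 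Comparing residues, $-1/c_n=G'(t_n)=K e^{i\gamma t_n}A'(t_n)$, so $|A'(t_n)|=(|K|\,|c_n|)^{-1}$ and $\sum_n t_n^{-2}|A'(t_n)|^{-1}=|K|\sum_n t_n^{-2}|c_n|<\infty$; by Lemma~\ref{class-k1}, $A\in\mathcal{K}_1$. This is exactly what the paper does, only packaged differently: the paper first proves the Hermite--Biehler criterion Theorem~\ref{annih} (whose necessity direction is precisely the Smirnov/Krein argument above, applied to $\phi$ rather than to your $\Phi$, yielding an $E\in HB$ with $\phi=1/E$), and then in the proof of Theorem~\ref{annih2} reads off $\Phi=1/A$ from the model identity $\phi=\frac{1+\Theta}{2}\Phi=\frac{1+\Theta}{2A}$.
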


An unexpected (and rather counterintuitive) consequence of Theorem \ref{annih2} is that
adding a finite number of points to the spectrum helps it to
become removable, while deleting  a finite number of points may
make it nonremovable (see Corollaries \ref{remov} and \ref{nonremov} below).

We have an immediate counterpart of Theorem  \ref{annih2}
for compact operators which have Volterra rank one perturbations.

\begin{theorem}
\label{annih3}
Let $s_n \in \RR$, $s_n \ne 0$,
and $|s_n| \to 0$, $|n| \to \infty$, and let
$\A_0$ be a compact selfadjoint operator with simple point spectrum $\{s_n\}$.
The following are equivalent:

$(i)$ There exists a rank one perturbation $\LL_0 = \A_0 + a_0 b_0^*$
such that $\LL_0$ is a Volterra operator\textup;

$(ii)$ The points $t_n = s_n^{-1}$ form the zero set of some function
$F \in \mathcal{K}_1$.
\end{theorem}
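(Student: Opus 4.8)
The plan is to deduce Theorem~\ref{annih3} from Theorem~\ref{annih2} by means of the correspondence, recalled in the Introduction, between bounded rank one perturbations of a bounded selfadjoint operator and singular rank one perturbations of its inverse.

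First I would fix the dictionary between the two settings. Since $\A_0$ is a compact selfadjoint operator with simple point spectrum $\{s_n\}$ and $s_n\ne 0$, it is unitarily equivalent to the operator of multiplication by $x$ in $L^2(\nu)$, where $\nu=\sum_n\nu_n\delta_{s_n}$, $\nu_n>0$; in particular $\Ker\A_0=\{0\}$. The map $(Ug)(t)=g(1/t)$ defines a unitary operator $U$ from $L^2(\nu)$ onto $L^2(\mu)$, where $\mu=\sum_n\nu_n\delta_{t_n}$, $t_n=s_n^{-1}$, $|t_n|\to\infty$, and $U\A_0U^{-1}=\A^{-1}$ with $\A$ the multiplication by $x$ in $L^2(\mu)$ (note $0\notin\operatorname{supp}\mu$, so $\A^{-1}$ is bounded, in fact compact). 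Conjugation by $U$ carries rank one perturbations to rank one perturbations and preserves spectra, so it suffices to work with $\A$. Finally, since removability of $\{t_n\}$ does not depend on the masses $\nu_n$, Theorem~\ref{annih2} shows that condition (ii) of Theorem~\ref{annih3} is equivalent to the existence of a singular rank one perturbation $\LL$ of this particular $\A$ with $\sigma(\LL)=\emptyset$.

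Next I would check the two implications. For $(i)\Rightarrow(ii)$: if $\LL_0=\A_0+a_0b_0^*$ is a Volterra operator then $\Ker\LL_0=\{0\}$, so by the Introduction $\LL_0^{-1}$ is a singular rank one perturbation of $\A_0^{-1}$; for $z\ne 0$ the operator $I-z\LL_0$ is invertible (since $\sigma(\LL_0)=\{0\}$) and $(\LL_0^{-1}-z)^{-1}=\LL_0(I-z\LL_0)^{-1}$ is bounded, while $(\LL_0^{-1})^{-1}=\LL_0$ is bounded, so $\sigma(\LL_0^{-1})=\emptyset$. Transporting by $U$ gives a singular rank one perturbation of $\A$ with empty spectrum; hence $\{t_n\}$ is removable, which is (ii). For $(ii)\Rightarrow(i)$: given $F\in\mathcal{K}_1$ with zero set $\{t_n\}$, Theorem~\ref{annih2} provides a singular rank one perturbation $\LL=\LL(\A,a,b,\deab)$ with $\sigma(\LL)=\emptyset$; in particular $0\notin\sigma(\LL)$, i.e. $\LL$ is a bijection of $\mathcal{D}(\LL)$ onto $L^2(\mu)$. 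Reading off \eqref{2}, if $\deab=0$ then surjectivity of $\LL$ forces $\langle \A^{-1}f,b\rangle=0$ for all $f$, i.e. $b=0$, which contradicts \eqref{1+de-a-b} when $a\in L^2(\mu)$ and makes $\A^{-1}a$ a nonzero element of $\Ker\LL$ when $a\notin L^2(\mu)$; in either case $\LL$ is not invertible, a contradiction. Thus $\deab\ne 0$, so by the Introduction $\LL^{-1}=\A^{-1}-\deab^{-1}\A^{-1}a\,(\A^{-1}b)^*$ is an honest rank one perturbation of the compact operator $\A^{-1}$; it is injective and has no nonzero eigenvalue (one would give an eigenvalue of $\LL$), hence $\sigma(\LL^{-1})=\{0\}$ and $\LL^{-1}$ is a Volterra operator. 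Transporting by $U^{-1}$ yields the desired $\LL_0=\A_0+a_0b_0^*$.

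The analytic content is entirely carried by Theorem~\ref{annih2}, so there is no real obstacle; the only points requiring care are the bookkeeping at the origin --- an unbounded operator can have empty spectrum while its compact inverse has $0$ in its spectrum, so the equivalence ``$\sigma(\LL)=\emptyset$ if and only if $\LL^{-1}$ is Volterra'' must be verified directly rather than quoted from a naive spectral mapping statement --- and the observation that $\deab\ne 0$, which is precisely what guarantees that the algebraic inverse is a genuine finite rank perturbation of $\A^{-1}$.
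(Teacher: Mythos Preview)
Your proposal is correct and follows exactly the route the paper intends: Theorem~\ref{annih3} is presented there as an ``immediate counterpart'' of Theorem~\ref{annih2}, to be read through the bounded/unbounded dictionary in the Introduction, with no separate argument given. You have simply made that reduction explicit, and your extra care in checking that $\deab\ne 0$ (so that $\LL^{-1}$ is a genuine rank one perturbation of $\A^{-1}$) and in verifying the spectral correspondence $\sigma(\LL)=\emptyset \Leftrightarrow \sigma(\LL^{-1})=\{0\}$ fills in details the paper leaves to the reader.
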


The paper is organized as follows. In Sections \ref{fmodel} and \ref{entire}
we give some preliminaries on the functional model from \cite{bar-yak}
and on de Branges' theory. The proofs of Theorems \ref{annih2}
and \ref{annih3} are given in
Sections \ref{firstc}  and \ref{sect5}. Section \ref{examples_rem}
contains some examples of removable and nonremovable spectra, while
in Section \ref{livst} we discuss a simple proof of Liv\v sic's theorem
by our methods. In Section \ref{final} we show that sometimes the Volterra
property may be achieved by sufficiently `smooth'
perturbations and compare
this result with a classical completeness
theorem due to Macaev and some results from \cite{bar-yak}.
\bigskip
\\
{\bf Acknowledgements.} The authors are deeply grateful to N.~Nikolski
and R.~Romanov for many valuable discussions.
\bigskip


\section{Functional model}
\label{fmodel}

We use the notations $\BC^\pm=\big\{z\in \BC: \pm \ima z>0\big\}$
for the upper and the lower half-planes and set $H^2=H^2(\BC^+)$.
Recall that a function $\Theta$ is said to be an {\it inner
function} in $\BC^+$ if it is a bounded analytic function
with $|\Theta| =1$ a.e. on $\RR$ in the sense of nontangential boundary values.
Each inner function $\Theta$ generates a {\it shift-coinvariant} or {\it model
subspace} $K_\Theta \defin H^2\ominus \Theta H^2$ of the Hardy space $H^2$
(we refer to, e.g., \cite{nk12} for the theory of model spaces and for their numerous applications).

The following functional model of singular rank one perturbations
was constructed in \cite{bar-yak}.
Essentially, it is similar to functional models by Kapustin
\cite{Kap} (for the rank one perturbations of unitary operators)
and Gubreev and Tarasenko \cite{Gubr-Tar2010} (for the compact resolvent case);
for a more general model see \cite{Ryzhov}.
In what follows we assume that $b$ is a {\it cyclic}
vector for the resolvent of $\A$, so $b\ne 0$ $\mu$-a.e.

\begin{theorem}[\cite{bar-yak}, Theorem 0.6]
\label{rank-one-model}
Let $\LL=\LL(\A,a,b, \deab)$ be a singular rank one perturbation of $\A$,
and let $b$ be a {\it cyclic} vector for $\A^{-1}$.
Then there exist an inner function $\Theta$, such that
$\Theta$ is analytic in a neighborhood
of $0$,  $1+\Theta \notin H^2$, $\Theta(0) \ne -1$, and a function $\phi$ satisfying
\beqn
\label{main0}
\phi\notin H^2, \qquad
\frac{\phi(z) -\phi(i)}{z-i} \in K_\Theta,
\neqn
such that $\LL$ is unitary equivalent to the operator $\T=\T(\Theta,\phi)$
which acts on the model space $K_\Theta \defin H^2\ominus \Theta H^2$
by the formulas
$$
\cD(\T) \defin \{f=f(z)\in K_\Theta: \text{there exists}\ c=c(f)\in \BC:
zf-c\phi\in K_\Theta\},
$$
$$
\T f \defin zf - c \varphi, \qquad f\in \cD(\T).
$$

Conversely, any inner function $\Theta$ which is analytic in a neighborhood
of $0$ and satisfies $1+\Theta \notin H^2$, $\Theta(0) \ne -1$,
and any function $\phi$ satisfying \eqref{main0}
correspond to some singular rank one perturbation
$\LL= \LL(\A, a, b, \deab)$ of the operator $\A$ of multiplication by the independent
variable in $L^2(\mu)$, where $\mu$ is some singular measure on $\RR$ and
$x^{-1}  a(x)$, $x^{-1}  b(x) \in L^2(\mu)$.
\end{theorem}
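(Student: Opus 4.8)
\smallskip
\noindent\textbf{Proof sketch.}
The plan is to obtain the unitary equivalence from an explicit Cauchy (Clark-type) transform identifying $L^2(\mu)$ with a model space $K_\Theta$, in the spirit of the de Branges and Aleksandrov--Clark theory of rank one perturbations of self-adjoint operators, adapted to the present singular situation; it is convenient to keep in mind that $\LL$ is an algebraic inverse of a rank one perturbation of the compact self-adjoint operator $\A^{-1}$ (see the Introduction). First I would remove the cyclic vector. Since $b$ is cyclic for $\A^{-1}$ we have $b\ne0$ $\mu$-a.e., so $V\colon g\mapsto gb$ is a unitary of $L^2(\nu)$ onto $L^2(\mu)$, where $d\nu\defin|b|^2\,d\mu$; it intertwines multiplication by $x$ on both spaces and carries the (possibly generalized) vector $1$ to $b$. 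A one-line estimate shows $\nu$ is Poisson-finite: $\int_\RR(1+x^2)^{-1}\,d\nu=\int_\RR(1+x^2)^{-1}|b|^2\,d\mu\le\int_\RR x^{-2}|b|^2\,d\mu<\infty$ by \eqref{cond-a-b}, since $1+x^2\ge x^2$. Thus $\nu$ is the (generalized) scalar spectral measure of a cyclic self-adjoint model, which places us in the de Branges/Clark framework.

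Next I would build $\Theta$. Form the Herglotz function
\[
m(z)\defin c_0+\int_\RR\Big(\frac1{x-z}-\frac{x}{1+x^2}\Big)\,d\nu(x),\qquad z\in\BC^+,
\]
with singular spectral measure $\nu$, no linear term, and additive constant $c_0$ fixed by the normalization of the Clark correspondence. A suitable M\"obius image $\Theta$ of $m$ is then a singular inner function in $\BC^+$; the normalization (equivalently, the choice of the Clark parameter) is taken so that ``$0$ lies outside the spectrum of the model self-adjoint operator'' becomes ``$\Theta$ is analytic near $0$ and $\Theta(0)\ne-1$'', which holds here because $0\notin{\rm supp}\,\nu$ (note that then $|\Theta(0)|=1$ automatically). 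The remaining requirement $1+\Theta\notin H^2$ is a nondegeneracy condition which, after the transform, records that the data is genuinely singular rather than a regular perturbation; it holds automatically. By Clark's theorem there is a unitary $W\colon L^2(\nu)\to K_\Theta$ (a normalized Cauchy transform) intertwining multiplication by $x$ with the model self-adjoint operator on $K_\Theta$. Set $U\defin WV^{-1}\colon L^2(\mu)\to K_\Theta$; then $U\A U^{-1}$ is the model self-adjoint operator, and in fact $\A$ itself corresponds (trivially, with $a=0$) to $\T(\Theta,\phi_0)$ for some $\phi_0$ satisfying \eqref{main0}, so that $\T(\Theta,\phi)$ will be literally a singular rank one perturbation of the model of $\A$.

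Then I would transport the remaining data through $U$. Writing $U(\A^{-1}a)\in K_\Theta$ in the form $(z-i)^{-1}\big(\phi(z)-\phi(i)\big)$ \emph{defines} $\phi$, so $\phi$ is the Cauchy-type transform of the generalized vector $a$ (its free constant, together with $c_0$, recording $\deab$), the second relation in \eqref{main0} holds by construction, and $\phi\notin H^2$ because $a\notin L^2(\mu)$ --- while when $a\in L^2(\mu)$ one still gets $\phi\notin H^2$ precisely when $\deab$ avoids the forbidden value in \eqref{1+de-a-b}. It remains to match \eqref{2} with the description of $\cD(\T)$: under $U$, a vector $y=y_0+c\,\A^{-1}a$ with $y_0\in\cD(\A)$ corresponds to $f=Uy\in K_\Theta$ with $zf-c\phi\in H^2$; the defining constraint $\deab c+\langle y_0,b\rangle=0$ corresponds exactly to the orthogonality $zf-c\phi\perp\Theta H^2$, i.e.\ to the requirement that $zf-c\phi$ lie not merely in $H^2$ but in $K_\Theta$; and the action $\LL y=\A y_0=\A y-c a$ is carried to $f\mapsto zf-c\phi$. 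Hence $U\LL U^{-1}=\T(\Theta,\phi)$. The converse runs this backwards: from an inner $\Theta$ analytic near $0$ with $\Theta(0)\ne-1$ and $1+\Theta\notin H^2$, invert the M\"obius map to recover $m$, read off its singular spectral measure $\nu$ and the constant $c_0$, take $\mu\defin\nu$ and $b\equiv1$ (so $0\notin{\rm supp}\,\mu$), recover $a$ from $\phi$ by inverting the Cauchy transform and $\deab$ from $c_0$ and $\phi(i)$; then \eqref{cond-a-b} and \eqref{1+de-a-b} follow from \eqref{main0}, $\phi\notin H^2$ and $1+\Theta\notin H^2$, and the same $U$ carries $\LL(\A,a,b,\deab)$ onto $\T(\Theta,\phi)$.

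I expect the main obstacle to be the rigorous treatment of the non-square-integrable vectors $a$ and $b$: making sense of their ``Cauchy transforms'' (the function $\phi$ and the generalized vector $1$), controlling the behavior at infinity of functions in $K_\Theta$ --- which is exactly what $1+\Theta\notin H^2$ and $\phi\notin H^2$ encode --- and verifying that the single scalar constraint $\deab c+\langle y_0,b\rangle=0$ translates \emph{exactly}, with the correct free constants, into the orthogonality $zf-c\phi\perp\Theta H^2$ and not into something weaker or stronger.
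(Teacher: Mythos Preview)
The paper does not prove this theorem; it is quoted from \cite{bar-yak}, and the present paper only records the explicit formulas \eqref{singul2}--\eqref{the} for $\Theta$ and $\phi$ in terms of the data $(a,b,\deab)$, together with Proposition~\ref{param}. Your sketch is consistent with that construction and follows the same route: the measure $\nu=|b|^2\mu$ is exactly the Clark measure $\sigma_{-1}$ of $\Theta$ (see \eqref{the1}), your Herglotz function $m$ is the paper's $\rho$ up to the harmless difference between the subtractions $x/(1+x^2)$ and $1/x$, and your $\phi$ built from $U(\A^{-1}a)$ with a free constant recording $\deab$ is the paper's $\phi=\tfrac12\beta(1+\Theta)$. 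Your claims that $1+\Theta\notin H^2$ holds automatically (no linear term in the Herglotz representation, hence $p_{-1}=0$ by \eqref{mass}) and that $\phi\notin H^2$ under \eqref{1+de-a-b} are precisely Proposition~\ref{param}. So the approach is correct and is essentially the one the authors use in \cite{bar-yak}; the points you flag as obstacles (the generalized Cauchy transforms of $a$ and $b$, and the exact matching of the scalar constraint with membership in $K_\Theta$) are indeed the places where the work lies, but there is no missing idea.
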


The functions $\Theta$ and $\phi$ appearing in the model for
$\LL(\A,a,b, \deab) $ are given by the following formulas. Put
\beqn
\label{singul2}
\begin{aligned}
\beta(z) & = \deab + z b^*(\A-z)^{-1} \A^{-1}a \\
        & = \deab + \int \bigg(\frac{1}{x-z}-\frac{1}{x}\bigg)\, a(x)\overline{b(x)} \, d\mu (x),
\end{aligned}
\neqn
\beqn
\label{singul1}
\rho(z)
 = \de + zb^*(\A-z)^{-1} \A^{-1}b  =
\de + \int \bigg(\frac{1}{x-z}-\frac{1}{x}\bigg) |b(x)|^2
\, d\mu (x),
\neqn
where $\delta$ is an arbitrary real constant. Then $\Theta$ and $\phi$
are defined as
\beqn
\label{the}
\Theta(z)=  \frac{i-\rho(z)}{i+\rho(z)}, \qquad \quad
\phi(z)=\frac {\beta(z)} 2\, \big(1+\Theta(z)\big).
\neqn

The above functional model uses essentially the properties
of the so-called Clark measures introduced in \cite{cl}.
Recall that the Clark measure $\sigma_\zeta$, $|\zeta| =1$, is the
measure from the Herglotz representation
$$
i\,\frac{\zeta+\Theta(z)}{\zeta-\Theta(z)}=  p_\zeta z + q_\zeta
+\frac{1}{\pi}
\int_\mathbb{R} \Big(\frac{1}{t-z} -\frac{t}{t^2+1} \Big)
d \sigma_\zeta(t),  \qquad z\in\mathbb{C^+},
$$
where $p_\zeta \ge 0$, $q_\zeta\in \mathbb{R}$
and $\int_\RR (1+t^2)^{-1} d\sigma_\zeta(t)<\infty$.
Note that if $\Theta$ is meromorphic,
then any Clark measure $\sigma_\zeta$ is discrete.

It follows from the results of Ahern and Clark \cite{ac70} that
\beqn
\label{mass}
\zeta-\Theta \in H^2 \ \Longleftrightarrow \ p_\zeta>0.
\neqn

Note that in our model
\beqn
\label{the1}
i\, \frac{1-\Theta(z)}{1+\Theta(z)} =
\de + \int \bigg(\frac{1}{x-z}-\frac{1}{x}\bigg) |b(x)|^2
\, d\mu (x).
\neqn
Thus, the measure $\pi |b|^2 \mu$ is the Clark measure $\sigma_{-1}$
for $\Theta$.

Let us mention the following result of \cite{bar-yak}.

\begin{proposition}[\cite{bar-yak}, Proposition 2.2]
\label{param}
Let $a,b$ be functions that satisfy \eqref{cond-a-b} and let $\deab\in \RR$.
Let $\Theta$ and $\phi$ be defined by \eqref{the}.
Then we have:

1. $1+\Theta\notin H^2$, $\Theta(0)\ne -1$, and $\frac{\phi}{z+i} \in H^2$\textup;

2. If $a\notin L^2(\mu)$, then $\phi \notin H^2$\textup;

3. If $a\in L^2(\mu)$, then $\phi \in H^2$ if and only if
$\deab=\sum_n a_n\overline b_n t_n^{-1} \mu_n$.

\end{proposition}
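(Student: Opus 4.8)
The plan is to put $\phi$ into the framework of Clark's theory for the meromorphic inner function $\Theta$ and to reduce the three assertions to the single boundary identity $\phi(t_n)=ia_n/b_n$ at the atoms $t_n$ of the Clark measure. First I would record the elementary structure. Since $0\notin\operatorname{supp}\mu$ and $b/x\in L^2(\mu)$, the measure $|b|^2\mu$ satisfies $\int(1+t^2)^{-1}\,d(|b|^2\mu)<\infty$ and $\int|t|^{-1}\,d(|b|^2\mu)<\infty$; together with $\de\in\RR$ this makes $\rho$ from \eqref{singul1} a Nevanlinna function ($\operatorname{Im}\rho>0$ on $\BC^+$), meromorphic with simple poles at the $t_n$ of residue $-|b_n|^2\mu_n$, analytic near $0$, and with $\rho(0)=\de$, $\rho(\infty)=\de-\int t^{-1}\,d(|b|^2\mu)$. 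Hence $\Theta=(i-\rho)/(i+\rho)$ is a meromorphic inner function, analytic near $0$, with $\Theta(0)=(i-\de)/(i+\de)\neq-1$; moreover $1+\Theta=2i/(i+\rho)$, so by \eqref{the} $\phi=\tfrac12\beta(1+\Theta)=i\beta/(i+\rho)$. Comparing \eqref{the1} with the Herglotz representation of $i(1-\Theta)/(1+\Theta)$ identifies $\pi|b|^2\mu$ with the Clark measure $\sigma_{-1}$ of $\Theta$ and shows its linear term $p_{-1}$ vanishes; then \eqref{mass} gives $-1-\Theta\notin H^2$, i.e. $1+\Theta\notin H^2$, which together with $\Theta(0)\neq-1$ settles the first two assertions of part~1.

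The key computation is that at each $t_n$ the simple zero of $1+\Theta$ cancels the simple pole of $\beta$, so $\phi=i\beta/(i+\rho)$ continues analytically across $\RR$ near $t_n$, with $\phi(t_n)=ia_n/b_n$. Writing $\beta(z)=c_0+\int(t-z)^{-1}a\overline{b}\,d\mu$ with $c_0:=\beta(\infty)=\deab-\sum_n a_n\overline{b_n}t_n^{-1}\mu_n$ (the integral converges under \eqref{cond-a-b}), and using $a\overline b\,d\mu=\pi^{-1}g\,d\sigma_{-1}$ with $g:=a/b$, one gets
\[
\phi=\tfrac{c_0}{2}\,(1+\Theta)+Wg,\qquad (Wg)(z):=\frac{1+\Theta(z)}{2\pi}\int_{\RR}\frac{g(t)\,d\sigma_{-1}(t)}{t-z},
\]
where $Wg$ is, up to a scalar, the Clark normalized Cauchy transform of $g\,d\sigma_{-1}$. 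Since $\|g\|_{L^2(\sigma_{-1})}^2=\pi\|a\|_{L^2(\mu)}^2$, we have $g\in L^2(\sigma_{-1})\iff a\in L^2(\mu)$, in which case $Wg\in K_\Theta$ by Clark's theorem; and since $\sigma_{-1}$ is atomic with masses $\pi|b_n|^2\mu_n$, Clark's isometry reads $\|f\|_{H^2}^2=\pi\sum_n|f(t_n)|^2|b_n|^2\mu_n$ for $f\in K_\Theta$, so that $\|Wg\|_{H^2}^2=\pi\|a\|_{L^2(\mu)}^2$ (using $Wg(t_n)=\phi(t_n)=ia_n/b_n$) whenever $a\in L^2(\mu)$.

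For parts 2 and 3 the crux is that $\phi\in H^2$ forces $c_0=0$ and $\phi\in K_\Theta$. On $\RR$ one has $\overline\Theta\phi=\Theta^{-1}\phi=-i\beta/(\rho-i)=:\eta|_\RR$, and $\eta(z)=-i\beta(z)/(\rho(z)-i)$ extends analytically to $\BC^-$ (there $\operatorname{Im}\rho<0$, so $|\rho-i|\ge1$ and the $t_n$-poles of $\beta$ and $\rho$ cancel), where it lies in the Smirnov class $N^+(\BC^-)$ with $\eta(\infty)=-ic_0/(\rho(\infty)-i)$. If $\phi\in H^2$ then $\|\eta\|_{L^2(\RR)}=\|\phi\|_{H^2}<\infty$, so by Smirnov's theorem $\eta\in H^2(\BC^-)$; hence $\eta(\infty)=0$, i.e. $c_0=0$, i.e. $\deab=\sum_n a_n\overline{b_n}t_n^{-1}\mu_n$, and $\overline\Theta\phi\in\overline{H^2(\BC^+)}=(H^2)^{\perp}$, i.e. $\phi\perp\Theta H^2$ and so $\phi\in K_\Theta$; the Clark isometry then forces $\pi\|a\|_{L^2(\mu)}^2=\|\phi\|_{H^2}^2<\infty$. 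Conversely, if $a\in L^2(\mu)$ and $\deab=\sum_n a_n\overline{b_n}t_n^{-1}\mu_n$ (that is, $c_0=0$), then $g\in L^2(\sigma_{-1})$ and $\phi=Wg\in K_\Theta\subset H^2$. Thus $a\notin L^2(\mu)\Rightarrow\phi\notin H^2$ (part~2), and for $a\in L^2(\mu)$ one has $\phi\in H^2\iff\deab=\sum_n a_n\overline{b_n}t_n^{-1}\mu_n$ (part~3); I would remark that this threshold is exactly the equality excluded in \eqref{1+de-a-b}, which is the expected compatibility with Theorem~\ref{rank-one-model}.

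Finally, for $\phi/(z+i)\in H^2$ in all cases, split $\phi/(z+i)=\tfrac{c_0}{2}(1+\Theta)/(z+i)+Wg/(z+i)$. The first term is in $H^2$ because $(1+\Theta)/(z+i)$ is bounded analytic on $\BC^+$ with $\sup_{y>0}\int_\RR|(1+\Theta)/(x+i(y+1))|^2\,dx\le4\pi$. For the second, note $|t+i|^2=1+t^2$ and $\int|g|^2(1+t^2)^{-1}\,d\sigma_{-1}=\pi\int|a|^2(1+t^2)^{-1}\,d\mu\le\pi\|a/x\|_{L^2(\mu)}^2<\infty$, so $g/(t+i)\in L^2(\sigma_{-1})$ and $W(g/(t+i))\in K_\Theta\subset H^2$; the identity $((t+i)(t-z))^{-1}=(z+i)^{-1}((t-z)^{-1}-(t+i)^{-1})$ gives $Wg/(z+i)=W(g/(t+i))+c'(1+\Theta)/(2(z+i))$ with $c'=\pi^{-1}\int g(t)(t+i)^{-1}\,d\sigma_{-1}(t)$ finite (again by \eqref{cond-a-b}), so $Wg/(z+i)\in H^2$. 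I expect the main obstacle to be the careful setup of the half-plane Clark / normalized-Cauchy-transform formalism in this regularized situation, where $\sigma_{-1}$ need not be a finite measure — namely the isometry $L^2(\sigma_{-1})\to K_\Theta$ and the almost-everywhere boundary-value identity — together with the Smirnov-class continuation of $\overline\Theta\phi$ into $\BC^-$ invoked in parts 2 and 3.
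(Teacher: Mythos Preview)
This proposition is not proved in the present paper; it is simply quoted from the companion paper \cite{bar-yak} (as Proposition~2.2 there). There is therefore no proof here against which to compare your attempt.

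Your strategy---identifying $\pi|b|^2\mu$ with the Clark measure $\sigma_{-1}$, computing $\phi(t_n)=ia_n/b_n$, and using a Smirnov-class extension of $\bar\Theta\phi$ to $\BC^-$ to force $\phi\in K_\Theta$ whenever $\phi\in H^2$---is sound and natural. One repair is needed, however. The assertion $\int|t|^{-1}|b|^2\,d\mu<\infty$ (and hence that $\rho(\infty)$ is a finite real number) does not follow from $b/x\in L^2(\mu)$, which only gives $\int t^{-2}|b|^2\,d\mu<\infty$; consequently the step ``$\eta(\infty)=0\Rightarrow c_0=0$'' via $\eta(\infty)=-ic_0/(\rho(\infty)-i)$ is unjustified when $|\rho(-iy)|\to\infty$. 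This is easy to bypass: your Smirnov argument already yields $\phi\in K_\Theta$, hence $a\in L^2(\mu)$ by the Clark isometry and the boundary values $\phi(t_n)=ia_n/b_n$; once $a\in L^2(\mu)$ is known, the constant $c_0$ is well defined (Cauchy--Schwarz with $a\in L^2(\mu)$ and $b/t\in L^2(\mu)$), the splitting $\phi=\tfrac{c_0}{2}(1+\Theta)+Wg$ with $Wg\in K_\Theta$ is legitimate, and then $\phi\in H^2$, $Wg\in H^2$, $1+\Theta\notin H^2$ force $c_0=0$ directly. A similar reordering handles your last paragraph: rather than manipulating the possibly undefined object $Wg$, go straight to the identity your partial fractions actually produce, namely
\[
\frac{\phi(z)}{z+i}=\frac{\deab-ic''}{2}\cdot\frac{1+\Theta(z)}{z+i}+W\!\Big(\frac{g}{t+i}\Big),
\qquad c''=\int\frac{a\bar b}{x(x+i)}\,d\mu,
\]
where $|x(x+i)|\ge x^2$ makes $c''$ finite under \eqref{cond-a-b} and $g/(t+i)\in L^2(\sigma_{-1})$ as you observed; both summands are then in $H^2$ without further hypotheses.
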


Since we are interested in the case when $\mu$ is a discrete measure:
$\mu = \sum_n \mu_n \delta_{t_n}$,
where $|t_n| \to\infty$, $|n| \to \infty$,
the function $\Theta$ is meromorphic in the whole complex plane and
analytic on $\RR$; so is $\phi$ and any element of $K_\Theta$.
This situation reduces to the study of de Branges spaces of entire
functions (see Section \ref{entire} below).

From now on we assume that $\Theta$ and $\phi$ are meromorphic.
By the well-known properties of the model spaces $K_\Theta$,
a function $f\in H^2(\BC^+)$ is in $K_\Theta$ if and only if the
function $\tilde f(z) = \Theta(z) \overline{f(\overline z)}$
also is in $H^2(\BC^+)$. Analogously, we put
\[
\tilde \phi(z) = \Theta(z) \overline{\phi(\overline z)}.
\]

Denote by $Z_\phi$ the set of zeros of $\phi$ in
$\clos\BC^+$ and
put  $\overline{Z}_{\tilde \phi} = \{z\in\clos\BC^-: \overline z \in Z_{\tilde \phi}\}$.
It follows from \cite[Lemma 2.1]{bar-yak} that the functions
$$
h_\lambda(z) = \frac{\phi(z)}{z-\lambda}, \qquad \lambda\in Z_\phi\cup
\overline{Z}_{\tilde\phi}
$$
belong to $K_\Theta$, and, moreover,
all eigenfunctions of the model operator $\T$
are of the form $h_\lambda$, $\lambda\in Z_\phi\cup
\overline{Z}_{\tilde \phi}$.

\begin{lemma}[\cite{bar-yak}, Lemma 2.4]
\label{eigs}
Let meromorphic $\Theta$ and $\phi$ correspond to a
singular rank one perturbation of a cyclic selfadjoint operator
$\A$ with the compact resolvent. Then the following holds:

1. Operators $\LL$ and $\T$ have compact resolvents\textup;

2. $\si(\T)=\si_p(\T)=Z_\phi \cup \overline Z_{\tilde \phi}$\textup;

3. The eigenspace of $\T$ corresponding to an eigenvalue
$\la\in Z_\phi \cup \overline Z_{\tilde \phi}$, is spanned by
$h_\la$.
\end{lemma}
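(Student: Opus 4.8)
The plan is to establish each of the three assertions in turn, using the representations \eqref{singul2}--\eqref{the} together with the functional model of Theorem \ref{rank-one-model} and the Ahern--Clark characterization \eqref{mass}.

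First I would prove part 1. Since $b$ is cyclic for $\A^{-1}$ and $\A$ has compact resolvent, the measure $\mu$ is discrete with $|t_n|\to\infty$, so by Proposition \ref{param} and the discussion preceding the lemma, $\Theta$ is meromorphic and analytic on $\RR$; consequently $K_\Theta$ consists of entire functions and the resolvent set of $\T$ is nonempty. To see that the resolvent of $\T$ is compact, note that $K_\Theta$, as a space of entire functions of finite exponential type with a discrete spectral sequence, is a de Branges space in which the reproducing kernels along the real axis form an orthogonal basis; evaluation at a fixed point off the real axis is a bounded functional, and one checks directly from the defining formula $\T f=zf-c\phi$ that $(\T-\lambda)^{-1}$ maps the closed unit ball of $K_\Theta$ into a compact set. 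Since $\LL$ is unitarily equivalent to $\T$, the same holds for $\LL$.

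Next, parts 2 and 3. By the remarks quoted just before the lemma (from \cite[Lemma 2.1]{bar-yak}), every eigenfunction of $\T$ has the form $h_\lambda(z)=\phi(z)/(z-\lambda)$ with $\lambda\in Z_\phi\cup\overline Z_{\tilde\phi}$, and each such $h_\lambda$ lies in $K_\Theta$. So I would argue: if $\lambda\in Z_\phi\cup\overline Z_{\tilde\phi}$ then $h_\lambda\in K_\Theta$ and $zh_\lambda-\phi=\lambda h_\lambda$ shows $h_\lambda\in\cD(\T)$ with $\T h_\lambda=\lambda h_\lambda$, hence $\lambda\in\sigma_p(\T)$ and $h_\lambda$ spans (at least) a one-dimensional piece of the eigenspace. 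For the reverse inclusion, suppose $(\T-\lambda)f=0$ with $f\in\cD(\T)$, $f\ne0$; then $zf-c\phi=\lambda f$ for the corresponding constant $c=c(f)$, so $(z-\lambda)f=c\phi$. If $c=0$ then $f\equiv0$ on the (nondiscrete) real line, a contradiction; hence $c\ne0$ and $f=c\,\phi/(z-\lambda)$, which forces $\lambda\in Z_\phi\cup\overline Z_{\tilde\phi}$ because $f\in K_\Theta$ must be entire (so $\phi$ must vanish at $\lambda$ to order at least the pole, counting both the $H^2$ and the $\Theta\overline{\phi(\bar z)}$ sides). This simultaneously proves $\sigma_p(\T)=Z_\phi\cup\overline Z_{\tilde\phi}$, that the eigenspace at $\lambda$ is exactly $\BC h_\lambda$ (part 3), and, together with compactness of the resolvent from part 1, that $\sigma(\T)=\sigma_p(\T)$ (a compact-resolvent operator with nonempty resolvent set has purely discrete spectrum).

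The main obstacle I anticipate is the careful bookkeeping in the reverse inclusion of part 2: one must show that a candidate eigenfunction $f=c\,\phi/(z-\lambda)$ genuinely belongs to $K_\Theta$ \emph{only} when $\lambda$ lies in $Z_\phi\cup\overline Z_{\tilde\phi}$, which requires analyzing the membership condition $f\in H^2$ and $\tilde f=\Theta\overline{f(\bar z)}\in H^2$ at the point $\lambda$ and at $\bar\lambda$, using $\tilde\phi(z)=\Theta(z)\overline{\phi(\bar z)}$ and the fact that $\Theta$ has no zero or pole there. This is exactly the content of \cite[Lemma 2.1]{bar-yak} referenced in the excerpt, so I would invoke it rather than redo the local analysis; the remaining steps are then routine.
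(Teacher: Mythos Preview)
The paper does not actually contain a proof of this lemma: it is stated with attribution to \cite[Lemma~2.4]{bar-yak} and used as a black box, so there is no ``paper's own proof'' to compare against directly. That said, your argument for parts~2 and~3 is essentially correct and is precisely what one would expect: the eigenvalue equation $(z-\lambda)f=c\phi$ forces $c\ne0$ and $f=c\,h_\lambda$, and membership of $h_\lambda$ in $K_\Theta$ is equivalent (via \cite[Lemma~2.1]{bar-yak}, which you rightly invoke) to $\lambda\in Z_\phi\cup\overline Z_{\tilde\phi}$. Compactness of the resolvent then gives $\sigma(\T)=\sigma_p(\T)$.

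Your treatment of part~1, however, has a genuine gap. The sentence ``one checks directly from the defining formula $\T f=zf-c\phi$ that $(\T-\lambda)^{-1}$ maps the closed unit ball of $K_\Theta$ into a compact set'' is not an argument; nothing in the de~Branges reproducing-kernel structure by itself produces compactness of this particular resolvent without further work (for instance, you have not even written down $(\T-\lambda)^{-1}$). The clean route is the one already available in the introduction of the present paper: when $\deab\ne0$ one has $\LL^{-1}=\A^{-1}-\deab^{-1}\A^{-1}a\,(\A^{-1}b)^*$, which is a compact operator plus a rank-one operator, hence compact; thus $0\in\rho(\LL)$ and the resolvent of $\LL$ (and of $\T$, by unitary equivalence) is compact. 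For general $\deab$ one uses the analogous resolvent identity showing that $(\LL-\lambda)^{-1}-(\A-\lambda)^{-1}$ has rank one for $\lambda$ in the common resolvent set. You should replace your hand-wave in part~1 with this perturbation argument.
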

\medskip


\section{Preliminaries on entire functions}
\label{entire}

An entire function $E$ is said to be in the {\it Hermite--Biehler class}
(which we denote by $HB$) if
$$
|E(z)|>|E(\overline z)|, \qquad z\in \BC^+.
$$
We also always assume that $E \ne 0$ on $\RR$.
For a detailed study of the Hermite--Biehler class see
\cite[Chapter VII]{levin}.
Put $E^*(z) = \overline{E(\overline z)}$. If $E\in HB$, then
$\Theta = E^*/E$
is an inner function which is meromorphic in the whole plane $\BC$;
moreover, any meromorphic inner function
can be obtained in this
way for some $E\in HB$ (see, e.g., \cite[Lemma 2.1]{hm1}).

Given $E\in HB$, we can always write it as $E= A-iB$, where
$$
A=\frac{E+E^*}{2},\qquad B=\frac{E^*-E}{2i}.
$$
Then $A$, $B$ are real on the real axis and have simple real zeros.
Moreover, if $\Theta =E^*/E$, then $2A = (1+\Theta)E$.

Any function $E\in HB$ generates the de Branges space
${\mathcal H} (E)$ which consists of all entire functions $f$
such that $f/E$ and $f^*/E$ belong
to the Hardy space $H^2$, and $\|f\|_E = \|f/E\|_{L^2(\mathbb{R})}$
(for the de Branges theory see \cite{br}).
It is easy to see that the mapping $ f\mapsto f/E $
is a unitary operator from ${\mathcal H}(E)$ onto  $K_\Theta$
with $\Theta=E^*/E$ (see, e.g., \cite[Theorem 2.10]{hm1}).

An entire function $F$ is said to be of {\it Cartwright class}
if it is of finite exponential type and
$$
\int_{\RR} \frac{\log^+|F(x)|}{1+x^2}dx <\infty
$$
(recall that $\log^+ t = \max (\log t, 0)$, $t>0$).
For the theory of the Cartwright class we refer to \cite{hj, ko1}.
It is well-known that the zeros $z_n$ of a Cartwright class function
$F$ have a certain symmetry: in particular,
\beqn
\label{cartw}
F(z) = K z^m e^{icz}\, v.p. \prod_n \Big(1- \frac{z}{z_n}\Big)
\defin
K z^m e^{icz}
\lim_{R\to \infty} \prod_{|z_n|\le R} \Big(1- \frac{z}{z_n}\Big),
\neqn
where the infinite product converges in the
`principal value' sense, $c\in \RR$ and $K\in \BC$ are some constants,
$m\in\mathbb{Z}_+$.

It follows from this representation that a Cartwright class
function is determined uniquely by its zeros, up to a factor $K e^{i \gamma z}$, where
$K\in \BC$ and $\gamma\in \BR$ are constants.

A function $f$ analytic in $\BC^+$ is said to be of {\it bounded type} if
$f=g/h$ for some functions $g$, $h\in H^\infty(\BC^+)$.
If, moreover, $h$ can
be taken to be outer, we say that $f$
is in \textit{the Smirnov class in $\BC^+$}.
It is well known that if $f$ is analytic in $\BC^+$
and $\ima f>0$ (such functions are said to be the {\it Herglotz functions}),
then $f$ is in the Smirnov class  \cite[Part 2, Chapter 1, Section 5]{hj}.
In particular, if $t_n\in \mathbb{R}$, $u_n>0$ and $\sum_n u_n<\infty$,
then  the function $\sum_n \frac{u_n}{t_n-z}$ is in the Smirnov class
in $\BC^+$. Consequently,
$\sum_n \frac{v_n}{t_n-z}$ is in the Smirnov class in $\BC^+$
for any $\{v_n\}\in \ell^1$.

The following theorem due to M.G. Krein (see, e.g.,
\cite[Part II, Chapter 1]{hj}) will be useful:
{\it If an entire function $F$ is of bounded type both in $\BC^+$
and in $\BC^-$, then $F$ is of Cartwright class.
If, moreover, $F$ is in the Smirnov class
both in $\BC^+$ and in $\BC^-$, then $F$
is a Cartwright class function of zero exponential type.}

We also consider the class of entire functions introduced by M.G.
Krein \cite{krein47} (see also \cite[Chapter 6]{levin}). Assume
that $F$ is an entire function, which is real on $\RR$,
with simple real zeros $t_n \ne 0$ such that,
for some integer $p\ge 0$, we have
$$
\sum_n \frac{1}{|t_n|^{p+1} |F'(t_n)|}<\infty
$$
and \beqn \label{krein0} \frac{1}{F(z)} = R(z) + \sum_n
\frac{1}{F'(t_n)} \cdot \bigg(\frac{1}{z-t_n} +\frac{1}{t_n}+
\frac{z}{t_n^2}+\cdots + \frac{z^{p-1}}{t_n^p} \bigg), \neqn where
$R$ is some polynomial. The class of such functions $F$ we will denote by
$\mathcal{K}_p$. If $F \in \mathcal{K}_p$ for some $p$, then $F$ is of
Cartwright class \cite[Chapter 6]{levin}.
\bigskip


\section{First criterion of removability}
\label{firstc}

Recall that the spectrum $\{t_n\}$ with
$0\notin \{t_n\}$ is said to be {\it
removable} if there exists a singular perturbation $\LL = \LL(\A, a, b, \deab)$
of $\A$, whose spectrum is empty.
Here $\A$ is the operator of multiplication by $x$
in $L^2(\mu)$, $\mu = \sum_n \mu_n \delta_{t_n}$.
In this case, given $a, b \in x L^2(\mu)$ we
will write $a_n$ and $b_n$ in place
of $a(t_n)$ and $b(t_n)$.

It is obvious that if the spectrum of $\LL=\LL(\A, a, b, \deab)$
is empty, then $b$ must be a cyclic vector for $\A^{-1}$. In fact, if
$b_n =0$ then the vector $e_n$ defined by $e_n(t_k) = \delta_{nk}$
will be an eigenvector of $\LL$ corresponding to the eigenvalue $t_n$.
Indeed, $e_n$ belongs to $\mathcal{D}(\LL)$ since $(e_n, b) = 0$ and we may take $c=0$.

Since $b$ is cyclic for $\A^{-1}$, we may apply the functional model
from Section \ref{fmodel}.
Then, in view of Theorem \ref{rank-one-model} and Lemma \ref{eigs},
we have an immediate criterion of removability.

\begin{proposition}
\label{annih-1}
The spectrum $\{t_n\}$ is removable
if and only if there exist
a meromorphic inner function $\Theta$ with $\{t: \Theta(t)=-1\} =
\{t_n\}$, $\Theta(0) \ne -1$, $1+\Theta \notin H^2$, and a function
$\phi$ which satisfies \eqref{main0} such that both $\phi$ and $\tilde \phi$
have no zeros in $\BC^+\cup\RR$.
\end{proposition}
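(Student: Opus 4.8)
The plan is to combine Theorem \ref{rank-one-model} with Lemma \ref{eigs} and with the observation already made in the text that cyclicity of $b$ for $\A^{-1}$ is automatic once the spectrum is empty. First I would argue the forward implication. Suppose $\{t_n\}$ is removable, so there is a singular rank one perturbation $\LL=\LL(\A,a,b,\deab)$ of $\A$ with $\si(\LL)=\varnothing$. As noted just above the statement, emptiness of the spectrum forces $b$ to be cyclic for $\A^{-1}$ (otherwise some $e_n$ is an eigenvector). Hence Theorem \ref{rank-one-model} applies and produces a meromorphic inner function $\Theta$, analytic near $0$, with $1+\Theta\notin H^2$ and $\Theta(0)\ne -1$, and a function $\phi$ satisfying \eqref{main0}, such that $\LL$ is unitarily equivalent to $\T=\T(\Theta,\phi)$. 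Unitary equivalence preserves the spectrum, so $\si(\T)=\varnothing$; by Lemma \ref{eigs}, $\si(\T)=\si_p(\T)=Z_\phi\cup\overline Z_{\tilde\phi}$, and therefore both $Z_\phi$ and $\overline Z_{\tilde\phi}$ are empty. Now $Z_\phi$ is the zero set of $\phi$ in $\clos\BC^+$, and $\overline Z_{\tilde\phi}=\{z\in\clos\BC^-:\overline z\in Z_{\tilde\phi}\}$, so $Z_{\tilde\phi}$ being empty means $\tilde\phi$ has no zeros in $\clos\BC^+$. Thus $\phi$ and $\tilde\phi$ are zerofree in $\BC^+\cup\RR$. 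It remains to identify the level set $\{t:\Theta(t)=-1\}$: formula \eqref{the1} (equivalently \eqref{the}) shows that $\pi|b|^2\mu$ is the Clark measure $\sigma_{-1}$ of $\Theta$, and since $b$ is cyclic we have $b_n\ne 0$ for all $n$, so the atoms of $\sigma_{-1}$ sit exactly at the points $t_n=\operatorname{supp}\mu$, which are precisely the solutions of $\Theta(t)=-1$ on $\RR$ (a meromorphic inner function takes the value $-1$ on $\RR$ exactly at the atoms of $\sigma_{-1}$). Hence $\{t:\Theta(t)=-1\}=\{t_n\}$.

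For the converse, suppose we are given a meromorphic inner $\Theta$ with $\{t:\Theta(t)=-1\}=\{t_n\}$, $\Theta(0)\ne -1$, $1+\Theta\notin H^2$, and a function $\phi$ satisfying \eqref{main0} with $\phi,\tilde\phi$ zerofree in $\BC^+\cup\RR$. Since $\Theta$ is meromorphic it is automatically analytic in a neighborhood of $0$. By the ``conversely'' part of Theorem \ref{rank-one-model}, the pair $(\Theta,\phi)$ corresponds to some singular rank one perturbation $\LL=\LL(\A,a,b,\deab)$, where $\A$ is multiplication by $x$ in $L^2(\mu)$ for some singular $\mu$ with $x^{-1}a,x^{-1}b\in L^2(\mu)$; moreover $\LL$ is unitarily equivalent to $\T(\Theta,\phi)$. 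The Clark measure computation as above identifies $\operatorname{supp}\mu$ with $\{t:\Theta(t)=-1\}=\{t_n\}$, so $\A$ is (a rescaling of) the multiplication operator attached to the given spectrum $\{t_n\}$; in particular $b$ is cyclic for $\A^{-1}$ because $\sigma_{-1}=\pi|b|^2\mu$ has all its atoms present, forcing $b_n\ne 0$. Now Lemma \ref{eigs} gives $\si(\LL)=\si(\T)=Z_\phi\cup\overline Z_{\tilde\phi}$, and by hypothesis both sets are empty, since zerofreeness of $\phi$ on $\clos\BC^+$ kills $Z_\phi$ and zerofreeness of $\tilde\phi$ on $\clos\BC^+$ kills $Z_{\tilde\phi}$. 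Hence $\si(\LL)=\varnothing$ and $\{t_n\}$ is removable.

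The routine parts are the unitary-equivalence bookkeeping and the invocation of Lemma \ref{eigs}; the one point that needs a little care is the matching of the level set $\{t:\Theta(t)=-1\}$ with the prescribed spectrum $\{t_n\}$. Concretely, one must check that the atoms of the Clark measure $\sigma_{-1}$ of a meromorphic inner function are exactly the real solutions of $\Theta(t)=-1$ (a standard fact: near such a point $\Theta$ is analytic and $\Theta'(t)\ne 0$, so the Herglotz function $i(1-\Theta)/(1+\Theta)$ has a simple pole there, and conversely the Herglotz representation of this function places a point mass at every real pole), together with the observation that cyclicity of $b$ is equivalent to no atom of $\mu$ being annihilated, i.e.\ $b_n\ne 0$ for all $n$. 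The main (mild) obstacle is therefore purely notational: keeping track of which normalization of $\mu$ and of $\{t_n\}$ is being used, since by the remark in the text the removability property depends only on $\{t_n\}$ and not on the masses $\mu_n$, so any admissible choice of $\mu$ supported on $\{t_n\}$ suffices.
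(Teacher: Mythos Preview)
Your proof is correct and follows essentially the same approach as the paper: both combine the cyclicity observation preceding the proposition, Theorem \ref{rank-one-model} (in both directions), the Clark measure identification $\pi|b|^2\mu=\sigma_{-1}$ to match $\{t:\Theta(t)=-1\}$ with $\{t_n\}$, and Lemma \ref{eigs} to translate emptiness of the spectrum into zerofreeness of $\phi$ and $\tilde\phi$. Your version is simply more explicit, spelling out both implications and the Clark measure reasoning that the paper compresses into a couple of sentences.
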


\begin{proof}[Proof]
Indeed, if such pair $(\Theta, \phi)$ exists, then, by the converse
statement in Theorem \ref{rank-one-model} there is
a singular measure $\mu$, functions $a$ and $b$ and a constant $\deab$
such that $\LL(\A, a,b, \deab)$ is unitarily equivalent to the model operator
$\T(\Theta, \phi)$. Moreover, in this case $\Theta$ and $\phi$ are related
to the data $(a,b, \deab)$ by formulas (\ref{singul2})--(\ref{the}).
Thus, $|b|^2\mu $ is the Clark  measure $\sigma_{-1}$ for $\Theta$
whence $\{t: \Theta(t)=-1\} = \{t_n\}$.
By  Lemma \ref{eigs} the spectrum of $\T(\Theta, \phi)$ is empty
if and only if $\phi$ and $\tilde \phi$ do not vanish in $\BC^+\cup\RR$.
\end{proof}

The following statement gives a more palpable description of removable spectra.
In particular, we will see that the function $\phi$ may be chosen of the form
$1/E$ for a function $E$ in the Hermite--Biehler class.

\begin{theorem}
\label{annih}
The spectrum $\{t_n\}$ is removable if and only if
the
following two conditions hold:

\textup(1\textup) The set $\{t_n\}$ is the zero set of an entire function in
the Cartwright class, and so the \emph{generating function} of the
set $\{t_n\}$,
\beqn
\label{cart}
A(z) = v.p. \prod \Big(1-
\frac{z}{t_n}\Big)  = \lim_{R\to \infty} \prod_{|t_n|\le R}
\Big(1- \frac{z}{t_n}\Big),
\neqn
is well-defined and belongs to the Cartwright class;

\textup(2\textup) Moreover,
there exists an entire function $E$ of the Hermite--Biehler
class such that $ E+E^* = 2A$, $A\notin \mathcal{H}(E)$
and $\frac{1}{(z+i)E} \in H^2$.

%
%
%
If the spectrum is removable \textup(so that \textup(1\textup) 
and \textup(2\textup) hold\textup), then
the pair $(\Theta, \phi)$, corresponding to a perturbation
of $\A$ with empty spectrum
and a function $E$ in \textup(2\textup) may be chosen so that $\Theta = E^*/E$ and $\phi = 1/E$.
\end{theorem}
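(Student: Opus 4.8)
The plan is to derive Theorem \ref{annih} from Proposition \ref{annih-1} by translating the conditions on the model pair $(\Theta,\phi)$ into the language of entire functions via de Branges' theory. First I would show necessity. Suppose $\{t_n\}$ is removable, so by Proposition \ref{annih-1} there exist a meromorphic inner $\Theta$ with $\{t:\Theta(t)=-1\}=\{t_n\}$, $\Theta(0)\ne-1$, $1+\Theta\notin H^2$, and $\phi$ satisfying \eqref{main0} with $\phi,\tilde\phi$ zerofree on $\BC^+\cup\RR$. Write $\Theta=E^*/E$ for some $E\in HB$, as in Section \ref{entire}, and set $A=(E+E^*)/2$, so that $2A=(1+\Theta)E$; since $1+\Theta$ vanishes exactly on $\{t_n\}$ and $E$ has no real zeros, $A$ has zero set precisely $\{t_n\}$. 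Because $E\in HB$ is of Cartwright class (it is of bounded type and has nonnegative mean type in $\BC^+$, being inner times outer), $A$ is of Cartwright class, and by the uniqueness of Cartwright functions up to $Ke^{i\gamma z}$ together with the normalization $A(0)=1$ forced by $\Theta(0)\ne-1$, $A$ is the principal-value product \eqref{cart}; this gives condition (1). The key step is to pin down $\phi$: since $\tilde\phi=\Theta\,\phi^*$ and both $\phi,\tilde\phi$ are zerofree in $\BC^+\cup\RR$, the meromorphic function $\phi$ extends to an entire function with no zeros at all (the poles of $\Theta$ coincide with the poles of $\phi$ coming from $1/E$, and $\phi^*=\tilde\phi/\Theta$ is zerofree in $\BC^-$), so $1/\phi$ is entire and zerofree. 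Combined with $\phi/(z+i)\in H^2$ (Proposition \ref{param}, or directly from \eqref{main0}) and $\phi\notin H^2$, one shows $1/\phi$ is of Cartwright class and in fact, after absorbing a unimodular exponential into $E$, that $\phi=1/E$ with the same $E$; then $A\notin\mathcal H(E)$ is exactly the restatement of $1+\Theta\notin H^2$ via $2A=(1+\Theta)E$ and the unitary $f\mapsto f/E$ from $\mathcal H(E)$ onto $K_\Theta$, while $\frac{1}{(z+i)E}\in H^2$ is $\phi/(z+i)\in H^2$.

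For sufficiency, assume (1) and (2). Put $\Theta=E^*/E$ and $\phi=1/E$. Then $\Theta$ is a meromorphic inner function, $2A=(1+\Theta)E$ shows $\{t:\Theta(t)=-1\}$ is the zero set of $A$, which is $\{t_n\}$, and $A(0)=1$ (from \eqref{cart}) gives $\Theta(0)\ne-1$. The condition $A\notin\mathcal H(E)$ means $A/E=(1+\Theta)/2\notin H^2$, i.e.\ $1+\Theta\notin H^2$. Next, $\phi=1/E\notin H^2$: if it were in $H^2$ then $1/E\in\mathcal H(E)$ (using the unitary $K_\Theta\to\mathcal H(E)$), and one checks this forces $E$ to have an integrable $\log$ weight inconsistent with $A\notin\mathcal H(E)$ — more simply, $A\notin\mathcal H(E)$ already shows $(1+\Theta)/2\notin H^2$, and since $\phi=(1+\Theta)/(2A)\cdot A/E\cdot$... — here I would instead verify $\phi\notin H^2$ directly from $\frac{1}{(z+i)E}\in H^2$ together with $1+\Theta\notin H^2$: writing $2\phi/(z+i)\cdot\frac{1}{2}(z-i)\cdot\frac{1}{z-i}$... the cleanest route is to note $\frac{\phi-\phi(i)}{z-i}\in K_\Theta$ must be checked anyway, and $\phi\notin H^2$ then follows since $\phi(i)\ne0$ and $1\notin K_\Theta$. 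Finally $\frac{\phi(z)-\phi(i)}{z-i}\in K_\Theta$: this is where de Branges' theory does the work — $\frac{1}{E}-\frac{1}{E(i)}=\frac{E(i)-E}{E(i)E}$ has numerator an entire function vanishing at $i$, so $\frac{E(i)-E(z)}{(z-i)E(i)}$ is entire; one shows it lies in $\mathcal H(E)$ using that $\frac{1}{(z+i)E}\in H^2$ controls $f/E$ and $f^*/E$ at infinity, then transfers to $K_\Theta$ via the unitary map. With all hypotheses of Proposition \ref{annih-1} verified, $\{t_n\}$ is removable, and the construction yields exactly $\Theta=E^*/E$, $\phi=1/E$.

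The main obstacle is the sufficiency direction's verification that $\phi=1/E$ satisfies \eqref{main0}, namely $\frac{\phi-\phi(i)}{z-i}\in K_\Theta$ and $\phi\notin H^2$. The membership in $K_\Theta$ is not automatic from $E\in HB$ alone — it genuinely uses the extra condition $\frac{1}{(z+i)E}\in H^2$ to guarantee that $g(z):=\frac{E(i)-E(z)}{(z-i)E(i)E(z)}$ and its $*$-conjugate are both in $H^2$, hence $g\in K_\Theta$. The subtle point is handling the growth of $1/E$: a priori $1/E$ is only of Cartwright class and need not be in $H^2$ at all, so one must extract decay from the hypothesis rather than from general theory; dividing by $(z+i)$ and by the first difference quotient supplies two powers of decay, which is exactly enough for the $L^2(\RR)$ integrability of $g/E$ and $g^*/E$. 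In the necessity direction the analogous delicate point is showing that the zerofree meromorphic $\phi$ from Proposition \ref{annih-1}, combined with $\phi/(z+i)\in H^2$ and $\phi\notin H^2$, is of the form $1/E$ with $E\in HB$ (rather than merely $1/E$ times an entire zerofree factor of positive exponential type); this follows from Krein's theorem on functions of bounded type in both half-planes applied to $1/\phi$, giving zero exponential type, and then $1/\phi\in HB$ because $\Theta=E^*/E$ forces the Hermite--Biehler inequality.
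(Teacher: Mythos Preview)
Your necessity argument is on the right track, but the claim that $1/\phi$ has zero exponential type is not justified: $\phi$ is in the Smirnov class in $\BC^+$, but $1/\phi$ need not be (the inner factor $e^{ic_1 z}$ of $\phi$ moves to the denominator), so Krein's theorem only gives that $1/\phi$ is Cartwright, possibly of positive type. Equivalently, $E:=1/\phi$ need not satisfy $E^*/E=\Theta$, since in general $\phi\ne\tilde\phi$. The paper handles this by comparing the inner--outer factorizations of $\phi$ and $\tilde\phi$ to get $\phi/\tilde\phi=e^{2icz}$, setting $E=e^{icz}/\phi$, and then verifying that $\phi_1:=1/E=e^{-icz}\phi$ still satisfies \eqref{main0}; when $c>0$ this requires showing that $\frac{\phi}{z+i}$ is divisible by $e^{icz}$ in $H^2$, which uses the product representation of $E\in HB$ to get $|E(iy)|\to\infty$. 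Your identification ``$\frac{1}{(z+i)E}\in H^2$ is $\phi/(z+i)\in H^2$'' is only literally correct once this adjustment has been made.

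The more serious gap is in the sufficiency direction. You try to prove $\phi=1/E\notin H^2$ directly from conditions (1) and (2), but this is \emph{false in general}: nothing in the hypotheses $E+E^*=2A$, $A\notin\mathcal H(E)$, $\frac{1}{(z+i)E}\in H^2$ prevents $1/E$ from lying in $L^2(\RR)$. Your attempted routes (via $1\notin K_\Theta$, or via $A\notin\mathcal H(E)$) do not establish this, and cannot, because it need not hold for the given $E$. The paper's proof is structurally different here: it acknowledges that $1/E\in H^2$ may occur and \emph{modifies} $E$ to a new Hermite--Biehler function $E_*$ (with the same $A=(E_*+E_*^*)/2$, so (2) is preserved) by perturbing the masses $\nu_n$ in the Herglotz representation \eqref{g-par} of $B/A$. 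Choosing $\nu_{n_k}$ small along a sparse subsequence makes $|E_*(t_{n_k})|=\nu_{n_k}|A'(t_{n_k})|$ small, forcing $1/E_*\notin L^2(\RR)$, while a careful inductive choice keeps $\frac{1}{(x+i)E_*}\in L^2(\RR)$. This construction is the heart of the sufficiency proof, and without it the implication (1)$+$(2) $\Rightarrow$ removable does not go through.
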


%
%

%
%
%
%
%

\begin{proof} 
{\it Necessity of 1 and 2.} If the spectrum $\{t_n\}$ is removable, then
there is a pair $(\Theta, \phi)$ satisfying all conditions in Proposition
\ref{annih-1}. Since $\phi$ is a function of bounded type (and even of Smirnov class)
which does not vanish in $\BC$ and is analytic in a neighborhood of $\RR$,
its inner-outer factorization (see, e.g., \cite{ko0})
is of the form
$$
\phi(z) = e^{ic_1 z} {\mathcal O}(z),
$$
where $c_1 \ge 0$ and ${\mathcal O}$ is an outer function with $|{\mathcal O}| = |\phi|$ on $\RR$.
Since $\tilde \phi$ is also a function of Smirnov class, we have for $t\in\RR$,
$$
\tilde\phi (t) = \Theta(t) \overline{{\mathcal O}(t)}e^{-ic_1t} = {\mathcal O}(t) e^{ic_2t} = \phi(t)e^{i(c_2-c_1)t}
$$
for some $c_2 \ge 0$. We conclude that for $z\in \RR$ and, hence, for any $z\in \BC$,
\beqn
\label{dfg}
e^{2icz} = \frac{\phi(z)}{\tilde\phi(z)},
\neqn
where $2c=c_1 -c_2$.

The function $1/\phi$ is a meromorphic function, which has no poles
in $\BC^+$ and on $\RR$. Also, by (\ref{dfg}),
$$
\frac{1}{\overline{\phi(\overline z)}} =
\frac{\Theta(z)}{\phi(z)}\,e^{2icz}, \qquad z\in \BC^+,
$$
and so $1/\phi$ has no poles in $\BC^-$. We conclude that
$E= e^{icz}/\phi$ is an entire function. The function $E$ is in $HB$,
because $E^*/E =\Theta$. Also, $E$ is of bounded type both in the
upper and the lower half-planes, and so is of Cartwright class by
Krein's theorem.



Now put $\phi_1 = e^{-icz}\phi = 1/E$.
We assert that we can replace $\phi$ with $\phi_1$, that is, that $\phi_1$
also satisfies the conditions of Proposition \ref{annih-1}.
Indeed, we know that
$$
\frac{\phi}{z+i} = \frac{e^{icz}}{(z+i)E} \in H^2
$$
(see \eqref{main0}).
We assert that $\frac{\phi_1}{z+i} = e^{-icz}\, \frac{\phi}{z+i}$ is also in $H^2$.
If $c\le 0$, then it is obvious. On the other hand,
since $E\in HB$ is of order at most one and does not vanish on $\RR$,
it admits the following factorization (see, e.g., \cite[Chapter VII]{levin})
$$
E(z)= K e^{-iaz +bz} \prod_{n}\bigg(1-\frac
        {z}{z_n}\bigg) e^{h_n z},
$$
where $K\in \BC$, $a\ge 0$, $b\in \RR$,
$\{z_n\}$ is a finite or infinite sequence of points in $\BC^-$,
satisfying the Blaschke condition, and  $h_n =\rea \frac{1}{z_n}\ge 0$.
It follows that $|E(iy)| \to\infty$ when $y\to\infty$. Hence, when $c>0$,
$\frac{\phi(iy)}{i(y+1)} = o(e^{-cy})$, and, thus, the function
$\frac{\phi}{z+i}$ is divisible by $e^{icz}$ in $H^2$.
Since $\phi\notin H^2$, it follows that $\phi\notin L^2(\RR)$, so that
$\phi_1\notin H^2$. Next, since $\tilde \phi_1=\phi_1$,
it follows that $\Theta \,\frac {\bar \phi_1(t) -\overline{\phi_1(i)}}{t+i}\,\big|_\RR$ is in $H^2$, which
implies that $\frac {\phi_1(z) -\overline{\phi_1(i)}}{z-i}\in K_\Theta$. Hence
\eqref{main0} holds.

We get that
$E$ is both in the Hermite--Biehler and in the Cartwright class
and satisfies
$\frac{1}{(z+i)E} = \frac{\phi_1}{z+i} \in H^2$.
Then $A = \frac{E+E^*}{2}$ is a Cartwright class
function with zero set $\{t:\, \Theta(t)=-1\} = \{t_n\}$.
Since $1+\Theta \notin H^2$, we conclude that
$A = \frac{(1+\Theta)E}{2} \notin \mathcal{H}(E)$.

Notice that this argument also proves the last statement of the Theorem.
\bigskip
\\
{\it Sufficiency of 1 and 2.} If $E$ and $A$ are given,
we may put $\Theta = E^*/E$ and $\phi = 1/E$. Then $\Theta$ and $\phi$
satisfy all conditions of Proposition \ref{annih-1}, except, may be, one:
it may happen that $\phi = 1/E \in H^2$.

However, the function $E$ such that
$A=\frac{E+E^*}{2}$ is not unique.  Namely, the function
$E_*=A-iB_*$ is in $HB$ if and only if $B_*/A$ is a Herglotz function
in $\BC^+$, which holds if and only if
there exist $\nu_n \ge 0$,
$\sum_n t_n^{-2} \nu_n<\infty$, $p\ge 0$ and $r\in\RR$ such that
\beqn
\label{g-par}
\frac{B_*(z)}{A(z)} = pz +  r + \sum_n \nu_n
\Big(\frac{1}{t_n-z} -\frac{1}{t_n}\Big).
\neqn
In particular, there exist $\nu_n^0$ and $r_0$ such that
for our initial $E=A-iB$ we have
$$
\frac{B(z)}{A(z)} = r_0 + \sum_n \nu_n^0 \Big(\frac{1}{t_n-z}
-\frac{1}{t_n}\Big).
$$
Since $1+\Theta \notin H^2$ and
$\frac{B}{A} = i\frac{1-\Theta}{1+\Theta}$, the corresponding
summand $p_0 z$ is absent by \eqref{mass}.
Now consider the functions $B_*$ such that
$$
\frac{B_*(z)}{A(z)} = r_0 + \sum_n \nu_n \Big(\frac{1}{t_n-z}
-\frac{1}{t_n}\Big),
$$
where $\nu_n$ are free parameters; then
$$
E_*(z) = A(z) \bigg(1 - ir_0 - i \sum_n \nu_n \Big( \frac{1}{t_n -z} -
\frac{1}{t_n}\Big) \bigg).
$$
Clearly, $E_*(t_n) = i \nu_n A'(t_n)$, and choosing $\{\nu_n\}$
rapidly decreasing we can achieve that $\frac{1}{E_*} \notin
L^2(\RR)$. On the other hand, for the choice of $\nu_n=\nu_n^0$ we
have $E_*=E$, and so $\frac{1}{(x+i)E} \in L^2(\RR)$. Since $E_*$
is a `continuous function of $\nu_n$', it
is not difficult to show that 
there exist data $\{\nu_n\}$ such that $\frac{1}{(x+i)E_*} \in L^2(\RR)$,
whereas $\frac{1}{E_*} \notin L^2(\RR)$.
\medskip

For the convenience of the reader who might be not satisfied with the above 
'continuity' argument, we give a rigorous proof of
the existence of such sequence $\{\nu_n\}$.
It may be assumed that the sequence $\{t_n\}$ has infinitely many positive terms.
We will choose a rapidly increasing
subsequence $\{t_{n_k}\}_{k=1}^\infty$ of $\{t_n\}$ such that
$t_{n_k}\to +\infty$. We will set
\[
E_*(z) = A(z) \eta_*(z),
\quad
\text{ where }
\eta_*(z) =
1-ir_0 - i \sum_n \nu_{n,*} \Big(\frac{1}{t_n-z}
-\frac{1}{t_n}\Big),
\]
with
\[
\nu_{n,*}=
\begin{cases}
\nu^0_{n}, \quad n\ne n_k, \\
\nu'_k, \quad n = n_k ,
\end{cases}
\]
where $0<\nu'_k \le \nu^0_{n_k}$.
We will also define auxiliary points $\tau_k$ such that
$t_{n_{k-1}} \le \tau_{k-1} \le t_{n_k}$ for $k\ge 2$.
The sequences $\{n_k\}$, $\{\tau_k\}$ and the weights $\{\nu'_k\}$
will be defined by induction. To do that, we first introduce some more notation.
For $k\ge 0$, let
\[
\nu_{n,k}=
\begin{cases}
\nu^0_{n}, \quad n\ne n_\ell \text{ or } n = n_\ell, \enspace \ell>k, \\
\nu'_\ell, \quad n = n_\ell, \enspace 1\le \ell\le k,
\end{cases}
\]
denote the weight, changed only in the points $t_{n_1}, \dots, t_{n_k}$.
Put
\[
E_k(z) = A(z) \eta_k(z), \quad \text{ where } \eta_k(z) = 1-ir_0 -
i \sum_n \nu_{n,k} \Big(\frac{1}{t_n-z} -\frac{1}{t_n}\Big)
\]
(so that $E_0=E$).
It is easy to see that for any such choice, $\frac 1 {(x+i)E_k}\in L^2(\RR)$ for all $k$.

The inductive definition is as follows.
On the first step, choose any $t_{n_1}>0$ and any
$\tau_1>\max(4, 2t_{n_1})$.
Now suppose that the numbers
$t_{n_\ell}$, $\nu'_\ell$ and $\tau_\ell$ ($\ell= 1, \dots, k-1$) have been already chosen.
On  the $k$th step,  $n_k$, $\nu'_k$ and $\tau_k$ will be defined.
We will use the notation $J_\ell=[-\tau_\ell, \tau_\ell]$.

Choose $t_{n_k}>2\tau_{k-1}$ so that
$\nu_{n_k}t_{n_k}^{-2}\le 2^{-k-1} \tau _{k-1}^{-1}$.
It is possible because $\sum_n t_n^{-2}\nu_n<\infty$.

If $\big\|\frac 1 {(x+i)E_{k-1}}\big\|_{L^2(\RR \sm J_{k-1} )} \ge  2\tau_{k-1}^{-1}$, 
then we put $\nu'_k=\nu_{n_k}$ (so that $E_k=E_{k-1}$). Otherwise,
we choose $\nu'_k\in (0, \nu_{n_k})$ so that
$\big\|\frac 1 {(x+i)E_k}\big\|_{L^2(\RR \sm J_{k-1} )} = 2\tau_{k-1}^{-1} $.
It is possible because this norm is continuous as a function of $\nu'_k$
and tends to infinity as $\nu'_k\to 0^+$. Next, in both cases choose 
$\tau_k>t_{n_k}$ such that 
$\big\|\frac 1 {(x+i)E_k}\big\|_{L^2(J_k \sm J_{k-1})} = \tau_{k-1}^{-1}$.
Notice that $\tau_k > 2\tau_{k-1}$, which gives that $\tau_k > 2^{k+1}$.


\medskip
We claim that the following properties hold.
\smallskip

(i) $|\eta_\ell(x)- \eta_{\ell-1}(x) |\le 2^{-\ell}$ on $J_k$ for $\ell > k$;

(ii) $\big\|\frac 1 {(x+i)E_k}\big\|_{L^2(\RR)}\le C$ 
for some constant $C$, independent of $k$;

(iii) The sequence of functions $\frac 1 {E_\ell}$ converges uniformly
to $\frac 1 {E_*}$ on $J_k$ for any $k$.
\smallskip
\\
These properties imply our statement. Indeed, (i) gives that for $\ell > m \ge k$,
\beqn
\label{estim}
\bigg| 1- \frac {\eta_m(x)}{\eta_\ell(x)} \bigg|\le
| \eta_\ell(x)-\eta_m(x) |
\le
\sum_{j=m}^{\ell-1} 2^{-j-1}  \le 2^{-m}
\quad \text{ for $x\in J_k$.}
\neqn
Next, (ii) and (iii) imply that the functions
$\frac 1 {(x+i)E_k}$ converge weakly to $\frac 1 {(x+i)E_*}$ in $L^2(\RR)$. In particular,
$\frac 1 {(x+i)E_*}$ is in $L^2(\RR)$. Fix some $k$.
For any $\ell>k$,
$|\eta_k/\eta_\ell|\ge \frac 12$ on $J_k$,
and therefore
\[
\bigg\|\frac 1 {E_\ell}\bigg\|_{L^2(J_k \sm J_{k-1})}
=
\bigg\|\frac {\eta_k} {\eta_\ell}\cdot \frac 1 {E_k}\bigg\|_{L^2(J_k \sm J_{k-1})}
\ge
\frac {\tau_{k-1} } 2 \, \bigg\|\frac 1 {(x+i) E_k}\bigg\|_{L^2(J_k\sm J_{k-1})}
\ge \frac 12,
\]
which by (iii) implies that
$\|\frac 1 {E_*}\|_{L^2(J_k \sm J_{k-1})} \ge\frac 1 2$
for any $k$. Therefore $\frac 1 {E_*}\notin L^2(\RR)$.

So it remains to check (i)--(iii).
\medskip

\textit{Proof of} (i): 
Let $\ell>k$, and let $x\in J_k \subset J_{\ell-1}$. Then
\[
|\eta_{\ell}(x)-\eta_{\ell-1}(x)|
=
(\nu_{n_\ell}-\nu'_\ell)\, \frac {|x|}{|t_{n_\ell}-x| t_{n_\ell}}
\le
\frac
{\nu_{n_\ell}\cdot2\tau_{\ell-1}}{t_{n_\ell}^2}
\le 2^{-\ell}.
\]
In particular, \eqref{estim} holds.
\medskip

\textit{Proof of} (ii):
For any index $k$ such that $E_k\ne E_{k-1}$, one has
\begin{align*}
\bigg\|\frac 1 {(x+i) E_k}\bigg\|^2_{L^2(\RR)}
& =
\bigg\| \frac {\eta_{k-1}} {\eta_k}
\cdot \frac 1 {(x+i) E_{k-1}} \bigg\|^2_{L^2(J_{k-1})}
+ \bigg\| \frac 1 {(x+i) E_k} \bigg\|^2_{L^2(\RR\sm J_{k-1})} \\
& \le
(1+2^{-k})^2\,
\bigg\|
\frac 1 {(x+i) E_{k-1}}
\bigg\|^2_{L^2(\RR)}
+
4 \tau_{k-1}^{-2}.
\end{align*}
Since $4 \tau_{k-1}^{-2} < 2^{-2k}$, one gets that
\[
1+\bigg\|
\frac 1 {(x+i) E_k}
\bigg\|^2_{L^2(\RR)}
\le
(1+2^{-k})^2
\bigg(1+\bigg\|
\frac 1 {(x+i) E_{k-1}}
\bigg\|^2_{L^2(\RR)}
\bigg).
\]
This inequality also holds if
$E_k=E_{k-1}$. Since $\prod_{k\ge 1} (1+2^{-k})^2$ converges
and $\frac 1 {(x+i)E_0}$ is in $L^2(\RR)$, property (ii) follows.
\medskip

\textit{Proof of} (iii):
It follows from \eqref{estim} that there are constants $C_k$ such that
$\big\|\frac 1 {E_\ell}\big\|_{L^2(J_k)}\le C_k$
for all $\ell$. Now it is easy to get from
the formulas $E_\ell= A  \eta_\ell$ and \eqref{estim}
that for any fixed interval $J_k$, $\big\{\frac 1 {E_\ell} \big\}$
is a Cauchy sequence in $C(J_k)$.
Since $\frac 1 {E_\ell}$ tend pointwise to $\frac 1 {E_*}$
on $\BR$, (iii) follows.
\end{proof}

\begin{example}
1. Let $t_n= n +\delta$, $n\in \mathbb{Z}$, $\delta \in (0,1)$.
This spectrum may be
annihilated by a one-dimensional perturbation, since we can take
$E(z) = ie^{-\pi i (z-\delta)}$  or
$E(z)= \sin \pi (z-\delta) +2i\cos \pi (z-\delta) $.
\smallskip

2. The spectrum  $\{t_n\} = \mathbb{N}$ is not removable, because
$\mathbb{N}$ is not a zero set of a Cartwright class function.
\end{example}

\begin{remark}
If $\frac{1}{(z+i)E} \in H^2$ for a Hermite--Biehler function $E$, then,
by Theorem~\ref{annih},
the spectrum $\{t_n\}$ (the zero set of $A = (E+E^*)/2$)
is removable. A number of conditions in
terms of the zeros of $E$ ensuring this inclusion
(which is equivalent to the fact that $1$ is a function
associated to $\mathcal H(E)$ in the terminology of \cite{br})
have been obtained in \cite{kw, bar-arkiv}, while a criterion in terms of zeros of $A$
and $B$ was given in \cite{wor}.
A slightly stronger property
$1\in \mathcal H(E)$ is closely related to the existence of positive minimal majorants
for $\mathcal H(E)$ \cite{hm1}.
\end{remark}
\medskip


\section{Conditions in terms of the generating function $F$. \\ 
Proof of Theorem~\ref{annih2}}
\label{sect5}

It is possible to give a complete characterization
of removable spectra solely in terms of the generating function $F$.
We will need the following simple lemma about the Krein class ${\mathcal K}_1$.

\begin{lemma}
\label{class-k1}
Let $\F(z) = v.p. \prod_n \big(1-\frac{z}{t_n}\big)$
be a Cartwright class function with simple
real zeros $t_n$. Then $\F \in \mathcal{K}_1$ if and only if
\beqn
\label{krein2} \sum_n \frac{1}{t_n^2 |\F'(t_n)|}<+\infty.
\neqn
Moreover, in this case in \eqref{krein0}, $R(z) \equiv R\equiv const$.
\end{lemma}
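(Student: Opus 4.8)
The plan is to analyze the partial fraction expansion of $1/F$ using the Cartwright class structure of $F$. Since $F$ is of Cartwright class with simple real zeros $t_n$, the meromorphic function $1/F$ has simple poles at the $t_n$ with residues $1/F'(t_n)$, and it is of bounded type in both half-planes. The condition \eqref{krein2} is exactly the condition $p=1$ in the definition \eqref{krein0} of $\mathcal{K}_p$, so one direction is essentially immediate: if $F\in\mathcal{K}_1$ then \eqref{krein0} with $p=1$ holds, and reading off the convergence requirement $\sum_n |t_n|^{-p-1}|F'(t_n)|^{-1}<\infty$ at $p=1$ gives \eqref{krein2}.

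For the converse, suppose \eqref{krein2} holds. First I would form the candidate series
\[
G(z) = \sum_n \frac{1}{F'(t_n)}\Big(\frac{1}{z-t_n}+\frac{1}{t_n}\Big);
\]
by \eqref{krein2} the coefficients $v_n = 1/F'(t_n)$ satisfy $\sum_n t_n^{-2}|v_n|<\infty$, so, as recalled in Section \ref{entire}, each of $\sum_n \frac{v_n}{t_n-z}$ is in the Smirnov class in $\BC^+$ and in $\BC^-$; hence $G$ is a meromorphic function, of Smirnov class in each half-plane, with exactly the same poles and principal parts as $1/F$. Therefore $1/F - G$ is entire. Next I would show this entire function is a polynomial of degree $\le 0$, i.e. a constant. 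The function $1/F$ is of bounded type in each half-plane (as $F$ is Cartwright), and $G$ is too, so $1/F - G$ is entire of bounded type in both half-planes, hence of Cartwright class by Krein's theorem quoted in Section \ref{entire}; moreover, being in the Smirnov class in both half-planes (here I use that $1/F$, having no zeros of $F$ off $\RR$ to worry about and $F$ Cartwright, is Smirnov in each half-plane — this point needs a short justification via the absence of an exponential factor, which follows since $1/F-G \to 0$ along $i\RR$ as one checks from the expansion), it is a Cartwright function of zero exponential type that is bounded, hence a constant $R$. This simultaneously gives $F\in\mathcal{K}_1$ and the last assertion that $R(z)\equiv R$ is constant.

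The main obstacle is the verification that $1/F - G$ decays (or at least stays bounded) so that the Cartwright/Smirnov machinery forces it to be constant rather than merely a polynomial: one must control $1/F$ and $G$ simultaneously along a suitable sequence (e.g. along the imaginary axis, or along circles avoiding the zeros $t_n$) and check that no polynomial growth is present. This is where the quantitative content of \eqref{krein2} — as opposed to the weaker condition $\sum_n |t_n|^{-3}|F'(t_n)|^{-1}<\infty$ that would only give $F\in\mathcal{K}_2$ — is used: it is precisely what allows the principal-value product defining $F$ to be matched by a genuinely convergent (weight-$t_n^{-2}$ summable) partial-fraction series, with no room left for a linear or higher-degree term in $R$.
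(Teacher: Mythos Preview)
Your approach is essentially identical to the paper's: form $R(z) = 1/F(z) - \sum_n \frac{1}{F'(t_n)}\big(\frac{1}{z-t_n}+\frac{1}{t_n}\big)$, observe it is entire, show both terms are in the Smirnov class in each half-plane so that $R$ has zero exponential type by Krein's theorem, and then use $|R(iy)| = o(|y|)$ (rather than boundedness) to force $R$ constant. The only point to sharpen is your Smirnov claim for the series $G$: since $\{1/F'(t_n)\}$ need not lie in $\ell^1$, the paper first rewrites $G(z) = z^2\sum_n \frac{1}{t_n^2 F'(t_n)}\cdot\frac{1}{z-t_n} - z\sum_n \frac{1}{t_n^2 F'(t_n)}$, so that the $\ell^1$ fact from Section~\ref{entire} applies to the coefficients $1/(t_n^2 F'(t_n))$.
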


\begin{proof}[Proof]
We need to show only that (\ref{krein2}) implies representation
(\ref{krein0}) with $R\equiv const$. Put
$$
R(z) = \frac{1}{\F(z)} - \sum_n \frac{1}{\F'(t_n)}
\Big(\frac{1}{z-t_n} +\frac{1}{t_n}\Big).
$$
Obviously, $R$ is an entire function. Moreover, since $\F$ is in
the Cartwright class and $|\F(iy)| \to\infty$ as $y\to\infty$, we
conclude that $1/\F$ is in the Smirnov class in the upper and in
the lower half-planes. The function
$$
\sum_n \frac{1}{\F'(t_n)} \bigg(\frac{1}{z-t_n}
+\frac{1}{t_n}\bigg) =
z^2 \sum_n \frac{1}{t_n^2 \F'(t_n)} \cdot
\frac{1}{z - t_n} - z \sum_n \frac{1}{t_n^2 \F'(t_n)}
$$
is also in the Smirnov class (see Section 2.4). Thus $R$ is of
zero exponential type by Krein's theorem. Finally, note that
$|R(iy)| = o(|y|)$, $|y|\to\infty$, and so $R$ is a constant.
\end{proof}

\begin{proof}[Proof of Theorem \ref{annih2}]
Assume that the spectrum
$\{t_n\}$
is removable. Then there exist $E = A-iB$ and
$\Theta = E^*/E$ as in Theorem \ref{annih}, so that
$1+\Theta \notin H^2$.
By \eqref{the}, $\phi=1/E$ is of the form
$$
\phi(z) =\frac{1+\Theta(z)}{2}\bigg( \deab + \sum_n c_n
\Big(\frac{1}{t_n-z}-\frac{1}{t_n}\Big) \bigg),
$$
where $c_n =a_n\overline b_n\mu_n$, and so $\sum_n
t_n^{-2}|c_n|<\infty$. On the other hand,
$$
\phi = \frac{1}{E}= \frac{1+\Theta}{2A},
$$
and we conclude that $1/A$ has the representation of the form
(\ref{krein}).

Conversely, if $A \in \mathcal{K}_1$, then
$A$ is of Cartwright class and
$$
\frac{1}{A(z)}= q + \sum_n
c_n\Big(\frac{1}{t_n-z}-\frac{1}{t_n}\Big), \qquad \sum_n
\frac{|c_n|}{t_n^2}<\infty.
$$
Now for any masses $\mu_n>0$ we may choose $a_n$ and
$b_n$ so that $c_n = a_n\overline b_n\mu_n$ and
$$
\sum_n |a_n|^2t_n^{-2}\mu_n<\infty,  \qquad \sum_n
|b_n|^2t_n^{-2}\mu_n<\infty.
$$
Indeed, note that $c_n = - 1/A'(t_n )\ne 0$,
and take $a_n = |c_n|^{1/2}\mu_n^{-1/2}$ and
$b_n = \overline{c_n} |c_n|^{-1/2}\mu_n^{-1/2}$.
Define $\Theta$ by formula (\ref{the1}) (with an arbitrary real constant
$\de$). By construction, $1+\Theta \notin H^2$ (see the equivalence (\ref{mass})).
Then put
$$
E = \frac{2A}{1+\Theta}.
$$
Clearly, $E$ is an entire function (the zeros sets
of $1+\Theta$ and of $A$ coincide) and
\beqn
\label{shar3}
\frac{E^*(z)}{E(z)} = \frac{1+\Theta(z)}{1+\overline{\Theta(\overline{z})}} =
\Theta(z)
\neqn
since $\overline{\Theta(\overline{z})} = \big(\Theta(z)\big)^{-1}$.
Thus, $E$ is a Hermite--Biehler function and $A\notin \mathcal{H}(E)$
since $1+\Theta \notin H^2$.
Finally, for the function $\phi = 1/E$ we have
$$
\phi(z) = \frac{1+\Theta(z)}{2A(z)} =
\frac{1+\Theta(z)}{2}\bigg(q + \sum_n
c_n\Big(\frac{1}{t_n-z}-\frac{1}{t_n}\Big) \bigg).
$$
We see that $\phi$ is of the form (\ref{singul2}), whence,
by Proposition \ref{param}, $\frac{1}{(z+i) E} =
\frac{\phi}{z+i} \in H^2$. So the
spectrum $\{t_n\}$ is removable by Theorem \ref{annih}.
\end{proof}

A somewhat unexpected consequence of Theorem \ref{annih2} is that
adding a finite number of points to the spectrum helps it to
become removable, while deleting  a finite number of points may
make it nonremovable.

\begin{corollary}
\label{remov}
(i) If $\{t_n\}$ is removable, then for any finite set $\{\tilde
t_m\}_{m=1}^M$ disjoint with $\{t_n\}$
the spectrum $\{t_n\} \cup\{\tilde t_m\}_{m=1}^M$ is removable.

(ii) If the spectrum $\{t_n\}$ is removable, then by deleting a
finite number of elements of this sequence and adding the same
number of other elements we will always obtain a removable
spectrum.
\end{corollary}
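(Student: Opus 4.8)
The plan is to deduce both statements from Theorem~\ref{annih2} by modifying the generating function of $\{t_n\}$ by a suitable rational factor. Since $\{t_n\}$ is removable, Theorem~\ref{annih2} supplies a function $F\in\mathcal{K}_1$ with zero set exactly $\{t_n\}$; by Lemma~\ref{class-k1}, $F$ is of Cartwright class, has only simple real zeros, $F(0)\ne 0$, and $\sum_n t_n^{-2}|F'(t_n)|^{-1}<\infty$. In each part I would exhibit a new function of class $\mathcal{K}_1$ with the required zero set and then invoke Theorem~\ref{annih2} again.

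For part (i) I would take $G(z)=F(z)\prod_{m=1}^M\big(1-z/\tilde t_m\big)$. This $G$ is of Cartwright class (a Cartwright function times a polynomial), real on $\RR$, has simple zeros precisely at $\{t_n\}\cup\{\tilde t_m\}_{m=1}^M$, and $G(0)\ne 0$. The only nontrivial point is the summability condition of Lemma~\ref{class-k1} for $G$: at the finitely many $\tilde t_m$ it is automatic, and at $t_n$ one has $G'(t_n)=F'(t_n)\prod_m(1-t_n/\tilde t_m)$, so since $\prod_m|1-t_n/\tilde t_m|\to\infty$ the term $t_n^{-2}|G'(t_n)|^{-1}$ is dominated by $t_n^{-2}|F'(t_n)|^{-1}$ for all but finitely many $n$. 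Hence $G\in\mathcal{K}_1$ and $\{t_n\}\cup\{\tilde t_m\}$ is removable.

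For part (ii), with $t_{n_1},\dots,t_{n_K}$ the deleted points and $\tilde t_1,\dots,\tilde t_K$ the added ones, I would set
\[
H(z)=F(z)\,\frac{\prod_{j=1}^K(1-z/\tilde t_j)}{\prod_{k=1}^K(1-z/t_{n_k})},
\]
writing $Q(z)=\prod_k(1-z/t_{n_k})$ and $P(z)=\prod_j(1-z/\tilde t_j)$, so that $H=(F/Q)P$. Since $F$ has simple zeros at the $t_{n_k}$, $Q$ divides $F$, so $H$ is entire, with zero set $\big(\{t_n\}\setminus\{t_{n_k}\}\big)\cup\{\tilde t_j\}$, simple zeros, real on $\RR$, and $H(0)=F(0)\ne 0$. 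Division by a polynomial keeps $F/Q$ in the Cartwright class (same exponential type as $F$, and the $\log^+$-integral of $F/Q$ is finite since that of $1/Q$ is), hence $H=(F/Q)P$ is of Cartwright class. For the summability condition, at $t_n\notin\{t_{n_k}\}$ one has $H'(t_n)=F'(t_n)P(t_n)/Q(t_n)$, so $t_n^{-2}|H'(t_n)|^{-1}=t_n^{-2}|F'(t_n)|^{-1}\cdot|Q(t_n)|/|P(t_n)|$; since $P$ and $Q$ have the same degree $K$, the ratio $|Q(t)|/|P(t)|$ is bounded on the set of remaining $t_n$ (finite at each such point, which avoids the zeros of $P$, and tending to $\prod_j|\tilde t_j|/\prod_k|t_{n_k}|$ at infinity), so the new sum is bounded by a constant times $\sum_n t_n^{-2}|F'(t_n)|^{-1}<\infty$. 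Thus $H\in\mathcal{K}_1$ and Theorem~\ref{annih2} finishes the proof.

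I do not foresee a real obstacle: once one thinks of correcting $F$ by a rational factor, everything is forced. The points needing a little care are checking that division by a polynomial preserves the Cartwright class (for part (ii)) and tracking $|P(t_n)|$, $|Q(t_n)|$ so that the Krein summability of Lemma~\ref{class-k1} is preserved; the key is that in part (ii) the deleted and added sets have equal size, so the correcting factor is asymptotically constant, while in part (i) it grows and only helps.
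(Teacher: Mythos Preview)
Your argument is correct and follows the same idea as the paper: use Theorem~\ref{annih2} and check that the modified generating function stays in~$\mathcal K_1$ via the summability criterion of Lemma~\ref{class-k1}. The paper's proof is a single sentence (``multiplication by a polynomial maps $\mathcal K_1$ into itself''), which literally covers only part~(i); your explicit treatment of part~(ii), with the rational factor $P/Q$ of equal numerator and denominator degree so that $|Q(t_n)/P(t_n)|$ stays bounded, supplies the detail the paper leaves implicit. One minor remark: to apply Lemma~\ref{class-k1} as stated (for canonical products) you may note that any Cartwright function real on $\RR$ with zero set $\{t_n\}\not\ni 0$ is a real constant multiple of the canonical product, so the summability criterion indeed characterizes membership in $\mathcal K_1$ for your $G$ and $H$.
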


\begin{proof}[Proof]
The statements follows immediately from Theorem \ref{annih2} since the
multiplication by a polynomial maps the Krein class
$\mathcal{K}_1$ into itself.
\end{proof}

\begin{corollary}
\label{nonremov} There exists a removable spectrum $\{t_n\}$, such
that $\{t_n\}_{n\ne m}$ is nonremovable for any $m$.
\end{corollary}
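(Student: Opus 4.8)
The plan is to translate the statement, via Theorem~\ref{annih2} and Lemma~\ref{class-k1}, into an elementary question about the size of the derivative of a generating function at its zeros, and then to write down an explicit example. First I would record the following reformulation: since a Cartwright class function which is real on $\RR$ is determined by its zero set up to a real multiplicative constant (see Section~\ref{entire}), a sequence $\{s_n\}\subset\RR$ with $0\notin\{s_n\}$ and $|s_n|\to\infty$ is removable if and only if the generating function $S(z)=v.p. \prod_n(1-z/s_n)$ is well defined, belongs to the Cartwright class, and satisfies $\sum_n s_n^{-2}|S'(s_n)|^{-1}<\infty$; this is just Theorem~\ref{annih2} combined with Lemma~\ref{class-k1}. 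Hence it is enough to construct a Cartwright class function $A$, real on $\RR$, with $A(0)\ne 0$ and with simple real zeros $\{t_n\}$ (so automatically $0\notin\{t_n\}$ and $|t_n|\to\infty$), such that
$$
\sum_n \frac{1}{t_n^2\,|A'(t_n)|}<\infty
\qquad\text{but}\qquad
\sum_n \frac{1}{|t_n|\,|A'(t_n)|}=\infty .
$$

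Granting such an $A$, the spectrum $\{t_n\}$ is removable by the reformulation, while for each fixed $m$ the generating function of the punctured set $\{t_n\}_{n\ne m}$ is $A_m(z)=A(z)/(1-z/t_m)$, again of Cartwright class, with $A_m'(t_n)=A'(t_n)/(1-t_n/t_m)$ for $n\ne m$. Thus the relevant series for $A_m$ equals $\sum_{n\ne m}\frac{|t_m-t_n|}{|t_m|\,t_n^2\,|A'(t_n)|}$, and since $|t_m-t_n|\ge\frac12|t_n|$ for all large $n$, it is bounded below by a positive multiple of $\sum_n |t_n|^{-1}|A'(t_n)|^{-1}=\infty$. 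Hence $A_m\notin\mathcal K_1$ and, by the reformulation, $\{t_n\}_{n\ne m}$ is nonremovable; and the same inequality works for every $m$ simultaneously.

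For the construction I would take $t_n=n^2$, $n\in\N$, and
$$
A(z)=\prod_{n\ge 1}\Big(1-\frac{z}{n^2}\Big)=\frac{\sin\pi\sqrt z}{\pi\sqrt z},
$$
which is entire of order $1/2$, hence of Cartwright class, real on $\RR$, with $A(0)=1$ and simple zeros exactly at $\{n^2\}$. The single computation to carry out is the value of $A'$ at the zeros: substituting $w=\sqrt z$ gives $A'(n^2)=\frac{(-1)^n}{2n^2}$, so $|A'(t_n)|=\frac1{2t_n}$. Then $\sum_n t_n^{-2}|A'(t_n)|^{-1}=2\sum_n n^{-2}<\infty$, whereas $\sum_n |t_n|^{-1}|A'(t_n)|^{-1}=2\sum_n 1=\infty$, which is exactly what was needed.

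The step that carries the content is the choice of the example, not any argument: one must pick it so that the ``weight'' $|A'(t_n)|^{-1}$ is comparable to $|t_n|$, i.e.\ just large enough that the convergent series $\sum t_n^{-2}|A'(t_n)|^{-1}$ becomes the divergent series $\sum t_n^{-1}|A'(t_n)|^{-1}$ once a single zero is deleted (deleting $t_m$ multiplies the $n$-th term by a factor $\asymp|t_n|$). The only mildly technical ingredient is the explicit evaluation of $A'(n^2)$; checking that $A$ and $A/(1-z/t_m)$ are of Cartwright class, real on $\RR$, and have simple zeros is routine.
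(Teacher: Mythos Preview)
Your proof is correct and follows the same strategy as the paper's: reduce via Theorem~\ref{annih2} and Lemma~\ref{class-k1} to exhibiting a generating function with $|A'(t_n)|^{-1}\asymp |t_n|$, so that $\sum t_n^{-2}|A'(t_n)|^{-1}$ just converges while deleting any zero inserts an extra factor $\asymp|t_n|$ and makes it diverge. The paper's proof uses the example $\{t_n\}=(\BZ\setminus\{0\})\cup\{t_0\}$ with $A(z)=(z-t_0)\,\dfrac{\sin\pi z}{z}$ and only explicitly checks the deletion of $t_0$, whereas you take $\{n^2\}_{n\ge 1}$ (the example the paper treats later in Section~\ref{examples_rem}) and verify the nonremovability uniformly for every deleted index $m$; the two arguments are otherwise identical in spirit.
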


\begin{proof}[Proof]
Clearly, the spectrum $\{t_n\}_{n\in \mathbb{Z}}$, where $t_n = n$ for $n\in\BZ \setminus \{0\}$,
and $t_0$ is any real noninteger number is removable (take $A(z) = \frac{(z-t_0)  \sin \pi z}{z}$).
Now consider the spectrum $\{t_n\} = \{n\}_{n\in \mathbb{Z} \setminus\{0\}}$.
The corresponding generating function is $A(z) = \frac{\sin\pi z}{z}$ and $|F'(n)|
\asymp |n|^{-1}$. Hence the series $\sum_{n\ne 0}
\frac{1}{n^2|A'(t_n)|}$ diverges. Thus, $A\notin \mathcal{K}_1$
and so the spectrum is nonremovable.
\end{proof}
\medskip


\section{Examples of removable and nonremovable spectra}
\label{examples_rem}

In this subsection, we give some examples of removable and
nonremovable spectra with power growth
(one-sided and two-sided). To analyze the behavior of $|A'(t_n)|$
for the power growth of zeros we will use the Levin--Pfluger theory of
functions of completely regular growth \cite[Chapter 2]{levin}.
Assume that $t_n$ are all situated on the ray $\RR_+$ and
the counting function $n(r) = \#\{n: t_n\in [0,r]\}$ satisfies for
some $\rho \in(0,1)$,
\beqn
\label{lp1}
\lim\limits_{r\to\infty}
\frac{n(r)}{r^\rho} =D \in (0,\infty).
\neqn
Assume also that, for some $d>0$
\beqn
\label{lp2}
t_{n+1} -t_n \ge d\, t_n^{1-\rho}.
\neqn
Consider the discs $B_n = \{z: |z-t_n| <d\, t_n^{1-\rho}/2\}$.
Then we have
\beqn
\label{lp3}
\lim\limits_{t\to+\infty, \,
t\notin \cup_n B_n} \frac{\log|A(t)|}{t^{\rho}}  =  \pi D
%
%
\cot \pi\rho
\neqn
and
\beqn \label{lp4} \lim\limits_{t\to -\infty}
\frac{\log|A(t)|} {|t|^{\rho}} = \frac{\pi D}{\sin \pi\rho}.
\neqn
Moreover, it follows that \beqn \label{lp5}
\frac{\log|A'(t_n)|}{t_n^{\rho}} \to \pi D\, \cot\pi\rho, \qquad
n\to +\infty. \neqn

\begin{example}
1. {\it Two-sided symmetric power growth.} Assume that for some $\rho \in
(0,1)$, the spectrum $\{t_n\}$ satisfies
$$
\lim_{r\to\infty} \frac{\#\{t_n\in (-r,0)\}} {r^\rho}
 = \lim_{r\to\infty} \frac{\#\{t_n\in (0, r)\}} {r^\rho}  = D \in (0,\infty),
$$
and $t_{n+1} - t_n \ge d |t_n|^{1-\rho}$, $d>0$. Then
$t_n\asymp C |n|^{\frac 1 \rho}$ as $|n|\to \infty$.
It follows
from (\ref{lp3})--(\ref{lp5}) that
$$
\log|A'(t_n)| \sim \pi D |t_n|^{\rho} \,\cot\frac{\pi \rho}{2}
\asymp |n|, \qquad |n|\to+\infty,
$$
and so, by Lemma \ref{class-k1}, $A\in \mathcal{K}_1$ and the
spectrum $t_n$ is removable.

In particular, the spectrum $t_n = |n|^\gamma {\rm sign}\, n$,
$n\in \BZ \setminus \{0\}$,
where $t_0$ is any nonzero number in $(0,1)$,
is removable for any $\gamma>1$ (and for $\gamma =1$).
Note also that if $\gamma<1$, then the spectrum $\{t_n\}$ is not a zero set of
a function of exponential type and, hence, is
nonremovable.
\smallskip

2. {\it One-sided power growth.} Now let $t_n\in \RR_+$
satisfy conditions (\ref{lp1})--(\ref{lp2}). It follows from
(\ref{lp5}) and Lemma \ref{class-k1} that $\log |A'(t_n)| \asymp
|t_n|^\rho$ and the spectrum $\{t_n\}$ is removable when
$\rho<1/2$, while for  $\rho \in (1/2,1)$ we have $\log |A'(t_n)|
\asymp -|t_n|^\rho$ and the spectrum $\{t_n\}$ is nonremovable.
In particular, the power spectra $t_n= n^\gamma$, $n\in
\mathbb{N}$, are removable for $\gamma>2$ and
nonremovable for $\gamma<2$.
\smallskip

3. {\it The limit case: square growth.} For one-sided power
distributed zeros the limit case is the growth $t_n= n^2$, $n\in
\mathbb{N}$. This situation is more subtle. In this case
$$
  A(z)= \prod\limits_{n\in\mathbb{N}}\left(1-\frac{z}{n^2}\right)
  = \frac{\sin (\pi\sqrt{z})}{\pi\sqrt{z}},
$$
and so $|A'(t_n)| = (2t_n)^{-1} = (2n^2)^{-1}$. Then the series
(\ref{krein2}) converges and, by Lemma \ref{class-k1}, the
spectrum is removable. However, if we consider the spectrum
$\{n^2\}_{n\ge 2}$, then the corresponding generating function
$A_1$ satisfies $|A_1'(t_n)| \asymp |t_n|^{-2}$, and the spectrum
is nonremovable.

4. {\it Two-sided nonsymmetric growth.}
More generally, suppose that
$$
\lim_{r\to+\infty} \frac{\#\{\pm t_n\in (0, r)\}} {r^{\rho_\pm} }  = D_\pm  \in (0,\infty),
$$
and $t_{n+1} - t_n \ge d |t_n|^{1-\rho_\pm}$ for $\pm n >0$, where
$\rho_\pm\in(0,1)$ and $d>0$.
Define $u_+$, $u_+$ by
$$
u_\pm= D_\pm \cot \pi\rho_\pm + \frac {D_\mp }{\sin \pi\rho_\mp}\, .
$$
Then the same arguments as above imply that the spectrum is removable
if both $u_-$ and $u_+$ are positive and is not removable if at least one of these numbers
is negative. In particular, if $\rho_-, \rho_+ < 1/2$, then the spectrum is removable.
\end{example}

\begin{remark}
1. The special role of the exponent $2$ in the power distributed
spectra is well known; it may be seen,  e.g., in the problems of
weighted polynomial approximation on discrete subsets of $\RR$ \cite{bs}.

2. In the study of power growth, the regularity of the sequence is
important. It is easy to see that for any $\gamma>2$ there exists
a subset of $\{n^\gamma\}_{n\in \mathbb{N}}$, which is
nonremovable (take the set $n^\gamma$, $n\in [m_k, m_k+l_k]$ for
appropriately chosen $m_k,\, l_k \to \infty$).
\end{remark}

\begin{example}
Let $a>0$ and consider two shifted progressions:
$$
   t_n=
  \begin{cases}
   n+a, & n\ge 0, \ n\in\mathbb{Z}, \\
   n+1 -a,    & n<0, \ n\in\mathbb{Z},
  \end{cases}
$$
that is, $\{t_n\} = \{\dots, -a-1, -a \} \cup \{a, a+1, \dots\}$.
Then
\[
A(z) = \prod_{n=0}^\infty \Big( 1-\frac{z^2}{(n+a)^2}\Big) =
\frac
{\Ga(a)^2}{\Ga(a+z)\Ga(a-z)}
=
\frac
{\Ga(a)^2}{\pi\Ga(a+z)}\, \sin \pi(a-z) \Ga(1-a-z)
\, .
\]
Therefore for positive $k\in\BZ$,
\[
|A'(k+a)|
=
\frac
{\Ga(a)^2}{\Ga(k+2a)}\,  \Ga(k+1)
\asymp \Ga(a)^2  k^{1-2a} \quad \text{as }k\to +\infty.
\]
Since $A$ is an even function,
the series $\sum_{k>0} \frac{1}{k^2 |A'(t_k)|}$ converges (and
the spectrum is removable) if and only if $a<1$.
\end{example}

\begin{example}
One more class of examples with a nonremovable spectrum can be
obtained if we take a sequence of pairs of close points. In this
case the spectrum is `almost multiple' and thus nonremovable.
Let $\{t_n\}$ be a separated sequence (i.e., $\inf_n (t_{n+1} - t_n)>0$)
and consider the set $\{t_n\} \cup \{t_n + \delta_n\}$, where
$\delta_n \to 0$. If $\delta_n$ are sufficiently small, then we
can achieve that $|A'(t_n)|$ be small and, thus, (\ref{krein2}) is
not satisfied.
\end{example}
\medskip


\section{Liv\v sic's theorem on dissipative Volterra operators
with one-dimensional imaginary part}
\label{livst}

In this section we use our model to prove the above-mentioned
theorem  of Liv\v sic; it says that
any {\it dissipative Volterra operator}, which is a
rank one perturbation of a selfadjoint operator,
is unitary equivalent to the integration operator
(\cite{liv} or \cite[Ch. I, Th. 8.1]{Gohb_Kr_Volterr})
\footnote{We express our gratitude to N. Nikolski who attracted
our attention to Liv\v sic's theorem and suggested to deduce it
using our methods.}. Namely, we show the following:

\begin{theorem}[Liv\v sic, \cite{liv}]
\label{livs}
Let $\LL_0=\A_0 +i\B_0$
be a dissipative Volterra operator $($in some Hilbert space $H$$)$
such that both $\A_0$ and $\B_0$
are selfadjoint and $\B_0$ is of rank one.
Then the spectrum of $\A_0$ is given by $s_n = c (n+1/2)^{-1}$,
$n\in \mathbb{Z}$, for some $c\in \RR$, $c\ne 0$.
\end{theorem}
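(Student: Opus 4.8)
The plan is to pass to the inverse operator $\LL:=\LL_0^{-1}$, apply our functional model to it, and let the hypotheses on $\B_0$ force the model inner function to be an exponential.

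\emph{Reductions.} As $\LL_0$ is compact and $\B_0$ has rank one, $\A_0=\rea\LL_0=\LL_0-i\B_0$ is compact selfadjoint; writing $\B_0=\langle\cdot,g\rangle g$ with $g\ne0$ we have $\LL_0=\A_0+i\langle\cdot,g\rangle g$. Since $\LL_0$ is Volterra with trivial kernel, $\A_0$ has simple spectrum and $0\notin\sigma_p(\A_0)$ (e.g. $\A_0x=0$, $x\perp g$ would give $\LL_0x=0$, and both higher eigenvalue multiplicities and a one-dimensional kernel are excluded by the same device). Thus $\A:=\A_0^{-1}$ is a cyclic selfadjoint operator with compact resolvent, realised as multiplication by $x$ in $L^2(\mu)$ with $\mu=\sum_n\mu_n\delta_{t_n}$, $t_n=s_n^{-1}$; and $\LL=\LL_0^{-1}$ is a singular rank one perturbation $\LL(\A,a,b,\deab)$ of $\A$ with $\sigma(\LL)=\emptyset$ (because $\sigma(\LL_0)=\{0\}$). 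Comparing $\LL_0=\LL^{-1}=\A^{-1}-\deab^{-1}(a/x)(b/x)^{*}$ with $\A_0+i\langle\cdot,g\rangle g$, rank-oneness forces $a/x$, $b/x$, $g$ to be parallel, so $a=\lambda b$ for a constant $\lambda\ne0$; then $i\langle\cdot,g\rangle g=\gamma\,(b/x)(b/x)^{*}$ with $\gamma=-\lambda/\deab$, and $\B_0=-i\gamma\,(b/x)(b/x)^{*}$ being a nonzero positive selfadjoint rank one operator forces $\gamma\in i\RR$ and $\ima\gamma>0$.

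\emph{The model functions.} With $a=\lambda b$, formulas \eqref{singul2}--\eqref{the} give $\beta=\lambda\rho+\kappa_0$, $\kappa_0=\deab-\lambda\de$, and
\[
\phi=\tfrac12\,\beta\,(1+\Theta)=\tfrac12\bigl(p+q\,\Theta\bigr),\qquad
\tilde\phi=\tfrac12\bigl(\bar q+\bar p\,\Theta\bigr),\qquad p=\kappa_0+i\lambda,\quad q=\kappa_0-i\lambda,
\]
the second formula using $\Theta(z)\,\overline{\Theta(\bar z)}=1$. Because $\ima\gamma>0$ one computes $|p|^2-|q|^2=4|\deab|^2\,\ima\gamma>0$, so $\phi=0$ would require $\Theta=-p/q$ with $|p/q|>1$, impossible since $\Theta(\BC^+\cup\RR)\subset\overline{\mathbb D}$. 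Hence $\phi$ is automatically zero-free on $\BC^+\cup\RR$, and by Proposition~\ref{annih-1} the emptiness of $\sigma(\LL)$ reduces to the single condition that $\tilde\phi$ be zero-free there, i.e. that $\Theta$ omit the value $\omega_0:=-\bar q/\bar p\in\mathbb D$ on $\overline{\BC^+}$.

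\emph{Rigidity and conclusion.} Composing $\Theta$ with the disc automorphism $w\mapsto\frac{w-\omega_0}{1-\bar\omega_0 w}$ produces a meromorphic inner function without zeros in $\BC^+$, hence a meromorphic singular inner function; the only such functions are $c\,e^{iaz}$ with $a>0$, $|c|=1$ (an atom of the singular measure gives an essential singularity, a continuous part a branch cut). Therefore $\Theta(z)=\dfrac{\omega_0+c\,e^{iaz}}{1+\bar\omega_0 c\,e^{iaz}}$. I expect this classification, together with the bookkeeping that turned the operator hypotheses into the conditions $a=\lambda b$ and $\gamma\in i\RR_{>0}$, to be the main point. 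To finish I would use the freedom in the real constant $\de$ of \eqref{singul1}: different choices give unitarily equivalent models with the same level set $\{t:\Theta(t)=-1\}=\operatorname{supp}(\pi|b|^2\mu)=\{t_n\}$, so take $\de=0$. Then $\rho(0)=0$, whence $\Theta(0)=1$, and $\kappa_0=\deab$; since $\gamma=-\lambda/\deab\in i\RR$ we may write $\deab=i\mu_1\lambda$ with $\mu_1=(\ima\gamma)^{-1}>0$, so $p=i\lambda(\mu_1+1)$, $q=i\lambda(\mu_1-1)$ and $\omega_0=\tfrac{1-\mu_1}{1+\mu_1}\in(-1,1)$ is real; then $\Theta(0)=1$ forces $c=1$, giving $\Theta(z)=\frac{\omega_0+e^{iaz}}{1+\omega_0 e^{iaz}}$. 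Solving $\Theta(t)=-1$ (and using $1+\omega_0\ne0$) yields $e^{iat}=-1$, i.e. $t_n=\frac{2\pi}{a}\bigl(n+\tfrac12\bigr)$, $n\in\BZ$. Hence $\sigma(\A_0)=\{s_n\}$ with $s_n=t_n^{-1}=c\,(n+\tfrac12)^{-1}$, $c=\tfrac{a}{2\pi}\ne0$ (the set being unchanged under $c\mapsto-c$), which is the assertion.
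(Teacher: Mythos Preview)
Your proof is correct and reaches the same conclusion, but the route differs from the paper's in a way worth noting.

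The paper first normalizes to $a=ib$, $\deab=-1$, then passes to the entire function $g=\varphi E$ and computes it explicitly as $g=-A+i(B-\delta A)$; since the spectrum is empty $g$ is zero-free in $\BC$, and since $E$ (hence $A,B$) is of Cartwright class by Theorem~\ref{annih}, the factorization \eqref{cartw} forces $g(z)=e^{i\pi cz}$. Taking real parts on $\RR$ gives $A(z)=-\cos\pi cz$, and the zeros $t_n=c^{-1}(n+\tfrac12)$ drop out immediately.

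You instead observe that, with $a=\lambda b$, both $\phi$ and $\tilde\phi$ are affine in $\Theta$; dissipativity ($\ima\gamma>0$) makes $\phi$ automatically zero-free, and the Volterra hypothesis becomes simply that $\Theta$ omits a value $\omega_0\in\mathbb{D}$. Composing with the disc automorphism sending $\omega_0$ to $0$ yields a zero-free meromorphic inner function, hence $ce^{iaz}$ by the canonical inner factorization; after normalizing $\de=0$ you solve $\Theta(t)=-1$ directly. This stays entirely inside the model-space framework and avoids invoking Theorem~\ref{annih} and the Cartwright/Krein machinery, at the cost of a bit more bookkeeping with the M\"obius map and the normalization of $\de$. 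The affine-in-$\Theta$ observation is a clean way to see why dissipativity by itself already kills half of the eigenvalue conditions. Both arguments ultimately rest on the same rigidity principle (zero-free of restricted type $\Rightarrow$ exponential), expressed in your case for inner functions rather than Cartwright-class entire functions.

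One minor point: your claim that ``a one-dimensional kernel [of $\A_0$] is excluded by the same device'' is not quite justified by the sentence you wrote, since a kernel vector need not be orthogonal to $g$; but the paper's argument glosses over $0\notin\sigma_p(\A_0)$ in exactly the same way, so this is not a defect relative to the original.
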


From this, one may deduce that $\A_0$ is unitary
equivalent to the selfadjoint integral operator (having the same spectrum)
$$
(\tilde \A f) (x) = i\int_0^{2\pi c} f(t)\, {\rm sign}\, (x-t)\,dt, \qquad
f\in L^2(0,2\pi c),
$$
while $\LL_0$ is unitary equivalent to the integration operator
$(\tilde \LL f) (x) = 2i \int_0^{x} f(t)\,dt$.

Since $\B_0\ge 0$, we have $\B_0 x =  (x, b_0) b_0$
for some $b_0 \in H$.
By passing to the unbounded inverses, we obtain
(after an obvious unitary equivalence) a singular rank one perturbation
$\LL = \LL(\A, a, b, \deab)$
of the operator $\A$ of multiplication by the independent variable
in some space $L^2(\mu)$, where $\mu = \sum_n \mu_n \delta_{t_n}$, $t_n
= s_n^{-1}$. Moreover, in the case of the positive imaginary part,
we may assume that $\deab = -1$ and  $a = i b$.

Applying the functional
model from Section \ref{fmodel}, we construct a pair $(\Theta, \varphi)$
as in Theorem \ref{rank-one-model}. Let $E = A-iB \in HB$ be such that
$\Theta = E^*/E$ and let $g = \varphi E$. Then
by \eqref{singul2}, \eqref{singul1},
we have
$$
\begin{aligned}
\frac{B(z)}{A(z)} & = \delta+ \sum_n \bigg(\frac{1}{t_n-z}
- \frac{1}{t_n}\bigg) |b_n|^2 \mu_n, \\
\frac{g(z)}{A(z)} & = -1 +  i \sum_n \bigg(\frac{1}{t_n-z}
- \frac{1}{t_n}\bigg) |b_n|^2 \mu_n,
\end{aligned}
$$
whence $g = -A + i(B -\delta A)$.

Since $\LL$ (and, thus, the model operator $\T$)
is the inverse to a Volterra operator, the spectrum of $\T$ is
the point at infinity. By Lemma~\ref{eigs}, $g$ has no zeros in $\BC$. Also, by
Theorem \ref{annih}, the function $E$ is of Carthwright class,
and the same is true for $g$. We conclude that $g(z) = \exp(i \pi cz)$
for some real $c$. Thus,
$$
e^{i\pi cz} =  -A(z) + i\big(B(z) -\delta A(z)\big).
$$
The functions $A$ and $B$ are real on the real axis. Taking the real parts,
we get $A(z) = -\cos \pi cz$, and so $t_n = c^{-1} (n+1/2)$,
$n\in \mathbb{Z}$, as required.
\bigskip


\section{Volterra rank one perturbations generated by `smooth' vectors}
\label{final}

Let $\A_0$ be a compact selfadjoint operator with the simple
point spectrum $\{s_n\}$, that is, the operator of multiplication by $x$
in $L^2(\nu)$, where $\nu = \sum_n \nu_n \delta_{s_n}$.
In this section we show that, for a rank one perturbation
$\LL_0 = \A_0 +a b^*$, the property of being a Volterra
operator is compatible with a certain smoothness of the vectors $a$ and $b$.
On the other hand, recall that the classical completeness theorem of Macaev
\cite{Mats61} (see also~\cite[Chapter V]{Gohb_Krein}) states that
in the case when $a$ or $b$ is in the range of $\A_0$ (i.e., $a\in x L^2(\nu)$
or $b\in x L^2(\nu)$) and $\Ker \LL_0 = 0$, the perturbed operator $\LL_0$
has a complete set of eigenvectors (or root vectors
in case of multiple spectrum):

\begin{theorem_}
[Macaev, 1961]
If $\LL_0 =\A_0 (I+S)$, where $\A_0$, $S$ are compact operators on a Hilbert space,
$\A_0$ is selfadjoint and $S$ is in the Macaev ideal $\mathfrak {S}_\om$
\textup(i.e., its singular numbers $s_k$ satisfy $\sum_{k\ge 1} \frac {s_k} k <\infty$\textup)
and $\ker \A_0 = \ker(I+S)=0$, then $\LL_0$ and $\LL_0^*$ have complete
sets of eigenvectors.
\end{theorem_}

The following theorem shows that any weaker smoothness of $a$ and $b$
can be achieved for Volterra rank one perturbations.
A special case of this result was given in \cite[Theorem 0.6]{bar-yak}.

\begin{theorem}
\label{smooth}
Let $s_n \to 0$, $s_n \ne 0$, and assume that $\{t_n\}$, where $t_n =s_n^{-1}$,
is a removable spectrum. Let $A$ be the corresponding function in the
Krein class $\mathcal{K}_1$,
given by~\eqref{cart}. Assume that for some $\gaa \in (0,2)$
we have
$$
\sum_n \frac{1}{|t_n|^{\gaa} |A'(t_n)|} <\infty.
$$
Let $\A_0$ be a selfadjoint operator with the point spectrum $\{s_n\}$
and trivial kernel \textup(i.e., $\A_0$ is the operator of multiplication by $x$
in $L^2(\nu)$ where $\nu = \sum_n \nu_n \delta_{s_n}$\textup).
Then for any $\alpha_1, \alpha_2 \in [0,1)$ with $\alpha_1+ \alpha_2 \le 2- \gaa$
there exist $a \in |x|^{\alpha_1} L^2(\nu)$ and $b \in |x|^{\alpha_2} L^2(\nu)$
such that $a, b \notin L^2(\nu)$ and
the spectrum of the perturbed operator $\LL_0 = \A_0 + ab^*$
equals~$\{0\}$.
\end{theorem}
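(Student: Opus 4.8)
The plan is to run through the sufficiency argument of Theorem \ref{annih2} once more, this time keeping careful track of \emph{how fast} the parameters $a_n$, $b_n$ may be chosen to decay, and to match that decay against the required weights $|x|^{\al_1}$ and $|x|^{\al_2}$. Recall that in the construction behind Theorem \ref{annih2}, given $A\in\mathcal K_1$ with
$$
\frac{1}{A(z)} = q + \sum_n c_n\Big(\frac{1}{t_n-z}-\frac{1}{t_n}\Big),\qquad c_n = -\frac{1}{A'(t_n)},
$$
one produces a perturbation with empty spectrum by choosing masses $\mu_n>0$ and then $a_n$, $b_n$ with $a_n\overline b_n\mu_n = c_n$, under the only constraints $\sum_n |a_n|^2 t_n^{-2}\mu_n<\infty$ and $\sum_n |b_n|^2 t_n^{-2}\mu_n<\infty$ (plus $a,b\notin L^2(\nu)$ to keep the perturbation genuinely singular, and the inclusion $\phi/(z+i)\in H^2$, which is automatic by Proposition \ref{param}). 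Passing back through the unbounded-to-bounded correspondence, $\LL_0 = \A_0 + a_0 b_0^*$ is a Volterra operator with $\sigma(\A_0)=\{s_n\}$, $s_n = t_n^{-1}$.

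First I would reduce the weighted membership conditions to pointwise estimates: $a\in |x|^{\al_1}L^2(\nu)$ means, on the bounded side, $a_0\in |x|^{\al_1}L^2(\mu')$ for the transported measure, which after the inversion $t_n = s_n^{-1}$ translates into a summability condition of the shape $\sum_n |a_n|^2 |t_n|^{-2-2\al_1}\mu_n<\infty$, and similarly $\sum_n |b_n|^2 |t_n|^{-2-2\al_2}\mu_n<\infty$ for $b\in|x|^{\al_2}L^2(\nu)$. (I would state this translation cleanly as a short lemma or inline computation; it is the routine part.) Now use the freedom in splitting $c_n$: take
$$
a_n = |c_n|^{1/2}\,\mu_n^{-1/2}\,|t_n|^{\beta_1},\qquad
b_n = \overline{c_n}\,|c_n|^{-1/2}\,\mu_n^{-1/2}\,|t_n|^{\beta_2},
$$
with real exponents $\beta_1+\beta_2 = 0$ so that $a_n\overline b_n\mu_n = c_n$ still holds. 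The two summability requirements become
$$
\sum_n |c_n|\,|t_n|^{2\beta_1 - 2 - 2\al_1}<\infty,\qquad
\sum_n |c_n|\,|t_n|^{2\beta_2 - 2 - 2\al_2}<\infty.
$$
Since by hypothesis $\sum_n |c_n|\,|t_n|^{-\gaa}<\infty$, each of these holds as soon as $2\beta_1 - 2 - 2\al_1 \le -\gaa$ and $2\beta_2 - 2 - 2\al_2 \le -\gaa$, i.e. $\beta_1 \le 1 + \al_1 - \gaa/2$ and $\beta_2 \le 1 + \al_2 - \gaa/2$. With $\beta_2 = -\beta_1$ these two read $1 + \al_1 - \gaa/2 \ge \beta_1 \ge -1 - \al_2 + \gaa/2$, and this interval is nonempty precisely because $\al_1 + \al_2 \le 2 - \gaa$. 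So pick any such $\beta_1$; this forces $a,b$ to lie in the desired weighted spaces. For the genuine-singularity clauses $a,b\notin L^2(\nu)$, I would simply note that $\sum_n|a_n|^2\mu_n = \sum_n |c_n|\,|t_n|^{2\beta_1}$ and $\sum_n|b_n|^2\mu_n = \sum_n|c_n|\,|t_n|^{-2\beta_1}$, and that the masses $\mu_n$ never entered these sums after cancellation — so if these happen to converge, perturb the choice of $\beta_1$ slightly within its interval, or, as in the proof of Theorem \ref{annih}, thin out a subsequence of the $a_n$'s to force divergence while keeping the weighted norms finite; the weights only help here.

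The main obstacle, and the one point deserving real care, is the translation in the first step: making precise how the inversion $z\mapsto 1/z$ and the passage from $\LL$ back to $\LL_0 = \LL^{-1}$ acts on the weights, so that the abstract condition ``$a_0\in\Ran(|\A_0|^{\al_1})$'' is correctly matched with a power of $|t_n|$ rather than of $|s_n|$. Once the exponents are aligned correctly, the inequality $\al_1+\al_2\le 2-\gaa$ is exactly the condition that the interval of admissible $\beta_1$ is nonempty, and everything else — defining $\Theta$ by \eqref{the1}, setting $E = 2A/(1+\Theta)$, invoking Theorem \ref{annih} to conclude removability with empty spectrum, and Theorem \ref{rank-one-model} together with the bounded/unbounded dictionary from the introduction to transfer back to $\LL_0$ — is a verbatim repetition of the argument already carried out for Theorem \ref{annih2}. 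I would close by remarking that the threshold $\gaa<2$ is what allows any nonnegative $\al_1,\al_2$ at all, and that the endpoint $\al_1+\al_2 = 2-\gaa$ is genuinely reached since the admissible interval for $\beta_1$ is closed.
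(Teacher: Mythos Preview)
Your overall strategy is exactly the paper's: split $c_n=-1/A'(t_n)$ as $a'_n\overline{b'_n}\mu_n$ with power weights, and check that the hypothesis $\sum_n|c_n|\,|t_n|^{-\gamma}<\infty$ forces both weighted sums to converge precisely when $\alpha_1+\alpha_2\le 2-\gamma$. The paper simply picks the specific endpoint $\beta_1=1-\alpha_1-\gamma/2$ rather than exhibiting an interval, and handles the clause $a',b'\notin L^2(\mu)$ by multiplying $a'_{2n+1}$ and $b'_{2n}$ by carefully chosen scalars $p_{2n+1}\ge 1$, $p_{2n}^{-1}\ge 1$.

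However, the translation step you flag as ``the main obstacle'' is actually carried out with the wrong sign. On the unbounded side (where $a'_n\overline{b'_n}\mu_n=c_n$ is the constraint coming from $1/A$), the condition $a\in|x|^{\alpha_1}L^2(\nu)$ becomes
\[
\sum_n |a'_n|^2\,|t_n|^{\,2\alpha_1-2}\,\mu_n<\infty,
\]
not $\sum_n |a'_n|^2\,|t_n|^{-2-2\alpha_1}\mu_n<\infty$. (Trace it through: $a=c\,\A^{-1}a'$ up to the unitary $L^2(\mu)\to L^2(\nu)$, so $|a(s_n)|^2|s_n|^{-2\alpha_1}\nu_n \asymp |a'_n|^2|t_n|^{-2}\cdot|t_n|^{2\alpha_1}\mu_n$.) With your exponent, the two inequalities $\beta_1\le 1+\alpha_1-\gamma/2$ and $-\beta_1\le 1+\alpha_2-\gamma/2$ give a nonempty interval iff $\gamma\le 2+\alpha_1+\alpha_2$, which is vacuous; your assertion that ``this interval is nonempty precisely because $\alpha_1+\alpha_2\le 2-\gamma$'' does not follow from the computation as written. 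With the correct exponent the bounds read $\gamma/2-1+\alpha_2\le\beta_1\le 1-\alpha_1-\gamma/2$, and nonemptiness is exactly $\alpha_1+\alpha_2\le 2-\gamma$.

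One smaller point: the fix ``perturb $\beta_1$ slightly'' need not force $a',b'\notin L^2(\mu)$, since $\sum_n|c_n||t_n|^{2\beta_1}$ can converge for every $\beta_1$ in a bounded interval (e.g.\ when $|A'(t_n)|$ grows faster than any power). You need the multiplicative-sequence trick instead, as the paper does.
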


\begin{proof}[Proof]
Let us pass to the equivalent problem for a singular
perturbation of an unbounded operator
$\A$ (which is unitary equivalent to $\A_0^{-1}$)
on $L^2(\mu)$, where $\mu=\sum_n\mu_n\delta_{t_n}$,
$t_n =s_n^{-1}$, and  $a'_n = (\A^{-1}a)_n = a_n/s_n$, $b'_n= b_n/s_n$.
Thus, for any $\alpha_1$ and $\alpha_2$ as above, we need to find
$\mu=\sum_n \mu_n \delta_{t_n}$, $\deab\in \RR$,
and $a', b' \notin L^2(\mu)$ such that
\beqn
\label{shar0}
\sum_n |a_n'|^2 |t_n|^{2\alpha_1-2}\mu_n<\infty, \qquad
\sum_n |b_n'|^2 |t_n|^{2\alpha_2-2}\mu_n<\infty
\neqn
(note that $a_n' = a_n t_n$) and the function
\beqn
\label{shar2}
\phi(z) =
\frac{1+ \Theta(z)}{2} \cdot
\bigg(\deab + \sum_n \Big(\frac{1}{t_n-z} -\frac{1}{t_n}\Big)
a_n' \overline{b_n'} \mu_n \bigg)
\neqn
has no zeros in $\BC$.

For a function $A\in \mathcal{K}_1$ we have
\beqn
\label{shar1}
\frac{1}{A(z)} = q + \sum_n c_n\bigg(\frac{1}{t_n-z} -\frac{1}{t_n}\bigg),
\neqn
where $q = 1/A(0)$, $c_n = - 1/A'(t_n)$ and $\sum t_n^{-2}|c_n| < \infty$.
We represent $c_n$ as $c_n=a_n' \overline{b_n'}\, \mu_n$, where
$a_n'$ and $b_n'$ have the required properties.
Once such $a'$ and $b'$ have been constructed, we define the function $\Theta$
by the formulas \eqref{singul2} and \eqref{the} with $b_n'$ in place of
$b_n$, that is, we put
$$
i\frac{1-\Theta(z)}{1+\Theta(z)} =
\sum_n \bigg(\frac{1}{t_n-z}-\frac{1}{t_n}\bigg) |b'_n|^2 \mu_n.
$$
Next, define $\phi$ by (\ref{shar2}), with
$a_n' \overline{b_n'} \mu_n = c_n$ and with
$q$ in place of $\deab$.
If we now put $E = \frac{2A}{1+\Theta}$, then, clearly, $E$ is an entire function,
$\phi = 1/E$ by (\ref{shar1}),
and $\Theta = E^*/E$ (see (\ref{shar3})) whence $E\in HB$.
Thus, the function $\phi$ has no zeros.

Let $\{\mu_n\}$ be an arbitrary sequence of positive numbers. Put
$$
a_n' = \frac{|c_n|^{1/2} |t_n|^{(2-2\alpha_1-\gaa)/2}}{\mu_n^{1/2}},
\qquad
b_n' = \frac{|c_n|^{1/2} |t_n|^{(2\alpha_1 +\gaa-2)/2}}{\mu_n^{1/2}}.
$$
Then
$$
\sum_n |a_n'|^2 |t_n|^{2\alpha_1-2}\mu_n =
\sum_n \frac{|c_n|}{|t_n|^{\gaa}}<\infty,
$$
$$
\sum_n |b_n'|^2 |t_n|^{2\alpha_2-2}\mu_n = \sum_n \frac{|c_n|}
{|t_n|^{4-2\alpha_1- 2\alpha_2 -\gaa}}<\infty,
$$
since $\alpha_1+\alpha_2 \le 2-\gaa$.

If $a'$ and $b'$ are not in $L^2 (\mu)$, then the theorem is proved.
Otherwise, choose a sequence $p_{2n+1}\ge 1$ such that
$$
\sum_n p_{2n+1}^2 |a_{2n+1}'|^2 |t_{2n+1}|^{2\alpha_1-2}\mu_{2n+1} <\infty,
\qquad
\sum_n p_{2n+1}^2 |a_{2n+1}'|^2 \mu_{2n+1} = \infty
$$
(this is, obviously, possible, because $|t_n| \to \infty$ and $\alpha_1 <1$).
Analogously, we choose $p_{2n}\le 1$ so that
$$
\sum_n p_{2n}^{-2} |b_{2n}'|^2 |t_{2n}|^{2\alpha_2-2}\mu_{2n} <\infty,
\qquad
\sum_n p_{2n}^{-2} |b_{2n}'|^2 \mu_{2n} = \infty.
$$
Then, clearly, $\tilde a_n = p_n a_n'$ and $\tilde b_n = p_n^{-1} b_n'$ are not in $L^2(\mu)$,
$\tilde a \in x^{\alpha_1-1} L^2(\mu)$,
$\tilde b \in x^{\alpha_2-1} L^2(\mu)$
and $\tilde a_n \overline{\tilde b_n} = c_n$.
\end{proof}

\begin{remark}
One can compare Theorem~\ref{smooth} with Macaev's theorem
mentioned above as well as with the following result from \cite{bar-yak}
(Theorem 3.3, Statement (2)):
{\it Let $\A$ be an unbounded cyclic selfadjoint operator with
discrete spectrum $\{t_n\}$, $t_n \ne 0$, and let the data $(a,b, \deab)$ satisfy
\begin{align}
\label{smooth-sing}
& \sum_n \frac{|a_n b_n| \mu_n}{|t_n|}<\infty, \\
\label{import}
& \sum_n \frac{a_n\overline b_n \mu_n}{t_n} \ne \deab.
\end{align}
Then the singular rank one perturbation $\LL = \LL(\A, a, b, \deab)$
and its adjoint $\LL^*$ have complete sets of eigenvectors. }

Note that \eqref{smooth-sing} is satisfied if $1 \le \al_1+\al_2$.
However, there is no contradiction with Theorem \ref{smooth}. Indeed, looking at the
the asymptotics when $y\to\infty$ in
$$
\frac{1}{A(iy)} = \deab -\sum_n \frac{c_n}{t_n} +\sum_n \frac{c_n}{t_n -iy},
$$
where $c_n = a_n\overline b_n \mu_n$, we see that  \eqref{import} is not satisfied for the perturbation
constructed in Theorem \ref{smooth}.
\end{remark}

Thus, in contrast to Macaev's theorem (which applies to
the so-called weak perturbations of the form $\A_0(I+S)$ or $(I+S)\A_0$),
we have the following corollary of Theorem \ref{smooth}:

\begin{corollary}
\label{contr}
For any $\alpha_1, \alpha_2 \in (0,1)$, $\alpha_1+\alpha_2 >1$, 
there exist a positive compact operator $\A_0$ and its rank one 
perturbation $\LL_0$ of the form
$$
\LL_0 = \A_0 + \A_0^{\alpha_1} S \A_0^{\alpha_2},
$$
where $S$ is a rank one operator and $\Ker \LL_0 = \Ker \LL_0^* = 0$,
such that $\LL_0$ is a Volterra operator. 
\end{corollary}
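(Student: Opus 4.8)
The plan is to derive Corollary~\ref{contr} from Theorem~\ref{smooth}. Given $\alpha_1,\alpha_2\in(0,1)$ with $\alpha_1+\alpha_2>1$, set $\gaa=2-\alpha_1-\alpha_2$; then $\gaa\in(0,1)\subset(0,2)$ and $\alpha_1+\alpha_2=2-\gaa$, so the conditions on the exponents in Theorem~\ref{smooth} are met as soon as we exhibit a removable spectrum $\{t_n\}$ whose generating function $A$ from \eqref{cart} satisfies $\sum_n |t_n|^{-\gaa}|A'(t_n)|^{-1}<\infty$. Since $\gaa$ may be arbitrarily close to $0$, a mere polynomial lower bound on $|A'(t_n)|$ would not suffice; I would therefore choose a spectrum for which $|A'(t_n)|^{-1}$ decays faster than any power, and, in order to make the operator $\A_0$ positive, I want all $t_n>0$.

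Both requirements are met by the one-sided power sequence $t_n=n^3$, $n\in\mathbb{N}$. This spectrum is removable (Section~\ref{examples_rem}), its generating function $A(z)=v.p.\prod_n(1-z/n^3)$ is a Cartwright class function lying in $\mathcal{K}_1$, and it has order $\rho=1/3<1/2$, density $D=1$, and satisfies the gap bound \eqref{lp2}. Hence \eqref{lp5} applies: since $\cot(\pi/3)>0$ we get $\log|A'(t_n)|\asymp t_n^{1/3}=n$, so $|A'(t_n)|^{-1}\le Ce^{-cn}$ for some $c>0$, and $\sum_n |t_n|^{-\gaa}|A'(t_n)|^{-1}\le C\sum_n n^{-3\gaa}e^{-cn}<\infty$ for every $\gaa>0$, in particular for ours.

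Next I would apply Theorem~\ref{smooth} with this $\{t_n\}$, $s_n=t_n^{-1}=n^{-3}$, and the given exponents $\alpha_1,\alpha_2$: picking any positive weights $\nu_n$ and letting $\A_0$ be multiplication by $x$ on $L^2(\nu)$, $\nu=\sum_n\nu_n\delta_{s_n}$, one obtains $a\in|x|^{\alpha_1}L^2(\nu)$, $b\in|x|^{\alpha_2}L^2(\nu)$, $a,b\notin L^2(\nu)$, with $\sigma(\A_0+ab^*)=\{0\}$. Since $0<s_n\downarrow0$, $\A_0$ is a positive compact operator, $\A_0^{\alpha_i}$ is multiplication by $|x|^{\alpha_i}$, and $\tilde a:=\A_0^{-\alpha_1}a$, $\tilde b:=\A_0^{-\alpha_2}b$ lie in $L^2(\nu)$ and are nonzero (because $a,b\notin L^2(\nu)$, hence $a,b\ne0$). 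Using that $\A_0^{\alpha_2}$ is selfadjoint, $(ab^*)f=(f,b)a=(\A_0^{\alpha_2}f,\tilde b)\,\A_0^{\alpha_1}\tilde a=\A_0^{\alpha_1}(\tilde a\tilde b^*)(\A_0^{\alpha_2}f)$, so that $ab^*=\A_0^{\alpha_1}S\A_0^{\alpha_2}$ with $S=\tilde a\tilde b^*$ a nonzero rank one operator. Thus $\LL_0:=\A_0+ab^*=\A_0+\A_0^{\alpha_1}S\A_0^{\alpha_2}$ and $\sigma(\LL_0)=\{0\}$, i.e. $\LL_0$ is a Volterra operator.

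Finally, $\Ker\LL_0=\Ker\LL_0^*=0$. Indeed, by the reduction carried out at the start of the proof of Theorem~\ref{smooth}, $\LL_0$ is unitarily equivalent to the bounded inverse $\LL^{-1}$ of the singular rank one perturbation $\LL$ of $\A$ constructed there, and $\LL$ has empty spectrum; hence $\LL\colon\cDL\to L^2(\mu)$ is bijective, so $\LL^{-1}$ is injective with range $\cDL$, which is dense. Therefore $\Ker\LL_0=0$ and $\Ker\LL_0^*=(\Ran\LL_0)^\perp=0$. The only genuinely delicate point in the argument is the choice of spectrum: one must take a one-sided power sequence of exponent strictly larger than $2$ (equivalently, of order $<1/2$), since it is precisely in that range that the Levin--Pfluger asymptotics make $|A'(t_n)|$ grow rather than decay, so that the Krein-class series converges even when $\gaa$ is close to $0$.
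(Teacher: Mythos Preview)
Your proof is correct and follows precisely the route the paper intends: the corollary is stated without proof as an application of Theorem~\ref{smooth}, and you supply the missing ingredients --- a concrete one-sided positive spectrum $t_n=n^3$ for which the Levin--Pfluger asymptotics of Section~\ref{examples_rem} give exponential growth of $|A'(t_n)|$ (so that the hypothesis $\sum_n|t_n|^{-\gaa}|A'(t_n)|^{-1}<\infty$ holds for every $\gaa>0$), the factorization $ab^*=\A_0^{\alpha_1}(\tilde a\,\tilde b^*)\A_0^{\alpha_2}$, and the verification of $\Ker\LL_0=\Ker\LL_0^*=0$ via the inverse relation with the singular perturbation $\LL$ described in the Introduction (note that the $\deab$ arising in the proof of Theorem~\ref{smooth} equals $q=1/A(0)=1\ne0$, so that relation indeed applies). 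Your observation that a one-sided power exponent strictly greater than~$2$ (equivalently order $<1/2$) is needed to make $|A'(t_n)|$ grow rather than decay is exactly the point.
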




\begin{thebibliography}{25}

\bibitem{ac70} P.R. Ahern, D.N. Clark, Radial limits and
invariant subspaces, {\it Amer. J. Math.} {\bf 92} (1970),
332--342.

\bibitem{alb-kur-2000}
S. Albeverio, P. Kurasov,
{\it Singular Perturbations of Differential Operators:
Solvable Schr\"odinger Type Operators}, London Math. Society Lecture Note Series, 
Vol. 271,  Cambridge Univ. Press, Cambridge, 2000.

\bibitem{alb-kokoshm05}
S. Albeverio, A. Konstantinov, V. Koshmanenko,
Decompositions of singular continuous spectra of ${\mathcal H}_{-2}$-class rank one perturbations.
{\it Integr. Equat. Oper. Th.} {\bf 52} (2005), 4, 455--464.

\bibitem{bar-arkiv} A.D. Baranov, Polynomials in the de\,Branges spaces of entire
functions, {\it Ark. Mat.} {\bf 44} (2006), 1, 16--38.

\bibitem{bar-yak} A.~Baranov, D.~Yakubovich,
Completeness and spectral synthesis
of nonselfadjoint one-dimensional perturbations
of selfadjoint operators, arXiv:1212.5965.

\bibitem{Biy_Dzhumb}
B.N. Biyarov, S.A. Dzhumabaev,
A criterion for the Volterra property of boundary
value problems for Sturm--Liouville equations,
{\it Math. Notes} {\bf 56} (1994), 1, 751--753.

\bibitem{bs}
A. Borichev, M. Sodin, The Hamburger moment problem
and weighted polynomial approximation on discrete
subsets of the real line, {\it J. Anal. Math.} {\bf 76} (1998), 219--264.

\bibitem{br} L. de Branges, {\it Hilbert Spaces of Entire Functions},
Prentice Hall, Englewood Cliffs (NJ), 1968.

\bibitem{cl} D.N. Clark, One-dimensional perturbations of restricted shifts,
{\it J. Anal. Math.} {\bf 25} (1972), 169--191.

\bibitem{simon-makarov} R. Del Rio, N. Makarov, B. Simon,
Operators with singular continuous spectrum. II. Rank one operators,
{\it Comm. Math. Phys.} {\bf 165} (1994), 1, 59--67.

\bibitem{Dunf_Schwarz}
N. Dunford, J.T. Schwartz, {\it Linear Operators, Vol. 2: Spectral Theory},
Interscience, New York, 1963.

\bibitem{Gohb_Krein}
I. Gohberg, M. Krein, {\it Introduction to the Theory of
Linear Nonselfadjoint Operators}, Amer. Math. Soc., Providence, RI, 1969.

\bibitem{Gohb_Kr_Volterr}
I. Gohberg, M. Krein, {\it Theory and Applications of Volterra Operators
in Hilbert Space},  Amer. Math. Soc., Providence, RI, 1970.

\bibitem{Gubr-Tar2010}
G.M. Gubreev, A.A. Tarasenko, Spectral decomposition of model
operators in de Branges spaces,
{\it Mat. Sb.} 201 (2010), 11, 41--76;
English transl.: {\it Sb. Math.} 201 (2010), 11, 1599--1634.

\bibitem{Gubr-Lat2011}
G.M. Gubreev, Yu.D. Latushkin,
Functional models of nonselfadjoint operators, strongly continuous semigroups,
and matrix Muckenhoupt weights, {\it Izv. Ross. Akad. Nauk Ser. Mat.} {\bf 75} 
(2011), 2, 69--126;
English transl.: {\it Izv. Math.} {\bf 75} (2011), 2, 287--346.

\bibitem{hj}
V. Havin, B. J\"oricke, \textit{The Uncertainty Principle in Harmonic Analysis},
Springer-Verlag, Berlin, 1994.

\bibitem{hm1} V.P. Havin,  J. Mashreghi,
Admissible majorants for model subspaces of $H^2$. Part I: slow
winding of the generating inner function;
Part II: fast winding of the generating inner function
{\it Can. J. Math.} {\bf 55} (2003), 6, 1231--1263; 1264--1301.

\bibitem{kw}
M. Kaltenb\"ack, H. Woracek,
Hermite--Biehler functions with zeros close to the imaginary
axis, {\it Proc. Amer. Math. Soc.} {\bf 133} (2005), 1, 245--255.

\bibitem{Kap}
V.V. Kapustin, One-dimensional perturbations of singular unitary operators,
{\it Zapiski Nauchn. Sem. POMI} {\bf 232} (1996),
118--122;
English transl. in {\it J. Math. Sci. (New York)} {\bf 92} (1998), 1,
3619--3621.

\bibitem{Khromov}
A.P. Khromov,
Finite-dimensional perturbations of Volterra operators,
\textit{Sovrem. Mat. Fundam. Napravl.} {\bf 10} (2004), 3--163;
English transl.: \textit{J. Math. Sci.} (N.Y.)
{\bf 138} (2006), 5, 5893--6066.

\bibitem{ko0} P. Koosis, \textit{Introduction to $H^p$ spaces},
Cambridge Univ. Press, Cambridge, 1980.

\bibitem{ko1} P. Koosis,
\textit{The Logarithmic Integral I}, Cambridge Stud. Adv. Math.
{\bf 12}, 1988.

\bibitem{krein47}
M. Krein, A contribution to the theory of entire functions
of exponential type, {\it Izv. Akad. Nauk SSSR Ser. Mat.} {\bf 11}
(1947), 4, 309--326.



\bibitem{levin}
B.Ya. Levin, \textit{Distribution of Zeros
of Entire Functions}, GITTL, Moscow, 1956; English transl.: Amer.
Math. Soc., Providence, 1964; revised edition: Amer. Math. Soc.,
1980.


\bibitem{treil}
C. Liaw, S. Treil,
Rank one perturbations and singular
integral operators, {\it J. Funct. Anal.} {\bf 257} (2009), 6, 1947--1975.

\bibitem{liv}
M.S. Liv\v sic, On spectral decomposition of linear
nonselfadjoint operators, {\it Mat. Sb.}, {\bf 34(76)} (1954), 1, 145--199.

\bibitem{Lyants-Stor-book}                            
V.\`{E}. Lyantse, O.G. Storozh, {\it Methods of the Theory of Unbounded Operators},
Naukova Dumka, Kiev, 1983 (in Russian).

\bibitem{LukGub}
G. V. Lukashenko, G. M. Gubreev,
About nilpotent $C_0$-semigroups of operators
in the Hilbert spaces and criteria for similarity to the integration operator.
{\it Methods Funct. Anal. Topology} {\bf 14} (2008), 1, 60--66.

\bibitem{malam}
M.M. Malamud,
Remarks on the spectrum of one-dimensional
perturbations of Volterra operators, {\it Mat. Fiz.} {\bf 32} (1982), 99--105.


\bibitem{Mats61}
V.I. Macaev,
A class of completely continuous operators,
{\it Dokl. Akad. Nauk SSSR} {\bf 139} (1961), 3, 548--551;
English transl.: {\it Soviet Math. Dokl.} {\bf 2} (1961), 972--975.


\bibitem{MatsSod}
V. Macaev, M. Sodin, Entire functions and compact operators with
$S_p$-imaginary component.
Entire functions in modern analysis (Tel-Aviv, 1997), 243--260,
Israel Math. Conf. Proc., 15, Bar-Ilan Univ., Ramat Gan, 2001.

\bibitem{Naim-book}
M. A. Na\v \i mark,
\margp{\bf TO CHECK}
{\it Linear Differential Operators. Part II: Linear 
Differential Operators in Hilbert space}, 
Frederick Ungar Publishing Co., New York, 1968.

\bibitem{nk12}
N.K. Nikolski, \textit{Operators, Functions, and Systems: an
Easy Reading. Vol. 1-2}, Math. Surveys Monogr., Vol. 92--93, AMS,
Providence, RI, 2002.

\bibitem{Posilicano}
A. Posilicano, Self-adjoint extensions of restrictions,
\textit{Oper. Matrices} {\bf 2} (2008), 4, 483--506.

\bibitem{Romaschenko}
G.S. Romaschenko, On spectra of one-dimensional
perturbations of Volterra operators in Sobolev spaces,
{\it Ukrainian Math. Bull.}, {\bf 4} (2007), 3, 437--451.

\bibitem{Ryzhov}
V. Ryzhov,  Functional model of a class of nonselfadjoint
extensions of symmetric operators,
{\it Operator Theory: Advances and Applications}, Vol. 174, 117--158.

\bibitem{simon}
B. Simon, Spectral analysis of rank one perturbations and applications,
{\it CRM Lecture Notes}, Vol. 8 (J. Feldman, R. Froese, L. Rosen, eds.),
Amer. Math. Soc., Providence, RI, 1995, 109--149.

\bibitem{st1}
L.O. Silva, J.H. Toloza,
On the spectral characterization of entire operators with deficiency indices (1,1),
{\it J. Math. Anal. Appl.}, {\bf 367} (2010), 2, 360--373.

\bibitem{st2}
L.O. Silva, J.H. Toloza,
The class of $N$-entire operators, arXiv:1208.2218.

\bibitem{wor}
H. Woracek, De~Branges spaces of entire functions closed under
forming difference quotients, {\it Integr. Equat. Oper. Th.}
{\bf 37} (2000), 2, 238--249.

\end{thebibliography}
\end{document}